\def\xto#1{\xrightarrow[]{#1}}
\def\xto#1{\xrightarrow[]{#1}}
\def\id{\sf Id}
\def \ker{\mathop{\sf Ker}\nolimits}
\def \cok{\mathop{\sf Coker}\nolimits}
\def \Hom{\mathop{\sf Hom}\nolimits}
\def \Pic{\mathop{\sf Pic}\nolimits}
\def\1{^{-1}}
\newtheorem{De}{Definition}[section]
\newtheorem{Th}[De]{Theorem}
\newtheorem{Pro}[De]{Proposition}
\newtheorem{Le}[De]{Lemma}
\newtheorem{Co}[De]{Corollary}
\newtheorem{Rem}[De]{Remark}
\def\Pic{\mathfrak{Pic}}
\def \K{\mathfrak{K}}
\def \G{\mathfrak{G}}
\def \H{\mathfrak{H}}
\def \lim {\mathop{\sf lim}\nolimits}
\def \0{\mathsf{0}}
\def \tqu_f{\overline{\mathfrak{Qu}}_f}
\def \QU_f{\mathfrak{QU}_f}
\def \Qu_f{\mathfrak{Qu}_f}
\def \Q_f{\mathfrak{Q}_f} 
\def \QX_f{\mathfrak{Q}_{X,f}} 
\def \QU{\mathfrak{QU}}
\def \Qu{\mathfrak{Qu}}
\def \Q{\mathfrak{Q}}
\def \sQU_f{\mathsf{QU_f}}
\def \sQu_f{\mathsf{Qu_f}}
\def \sQ_f{\mathsf{Q_f}}
\def \sQX_f{\mathsf{Q}_{X,f}} 
\def \sQU{\mathsf{QU}}
\def \sQu{\mathsf{Qu}}
\def \sQ{\mathsf{Q}}
\def \sU{\mathsf{U}}
\def \dis{\mathsf{Dis}}
\def \Dis{\mathfrak{Dis}}
\begin{document}
\title{On the group of separable quadratic algebras and stacks}

\author[I. Pirashvili]{Ilia Pirashvili}
\email{ilia\_p@ymail.com}

\maketitle

\begin{abstract} 

The aim of this paper is to study the group of isomorphism classes of torsors of finite flat group schemes of rank 2 over a commutative ring $R$. This, in particular, generalises the group of quadratic algebras (free or projective), which is especially well studied. Our approach however, yields new results even in this case.

\end{abstract}

\section{Introduction}

We are going to study the group of separable quadratic algebras, which was introduced by Bass in \cite[Section 4.3] {bass_tata} and was extensively studied by several authors (see for example \cite{small}, \cite{hahn} and references therein), using the methods of 2-dimensional categorical algebra. Our methods are based on the fact that stackification is an exact 2-functor.

Let $R$ be a commutative ring and we denote by $\sQu(R)$ the group of isomorphism classes of separable quadratic $R$-algebras \cite[Ch. 12]{hahn} and by ${\sf Dis}(R)$ the group of isomorphism classes of nonsingular symmetric bilinear $R$-modules, which are finitely generated and projective of rank $1$ \cite[Ch. 4C]{hahn}. According to \cite[Statement 12.6, p.178]{hahn}, one has an exact sequence of abelian groups
$$\sQu(R)\to {\sf Dis}(R)\to {\sf Dis}(R/4R).$$  
Our goal is to extend this important exact sequence on the left hand side.  It follows from our results that  the following  is true:

\begin{Th}\label{exact} If $2$ is not a zero-divisor in $R$, one has an exact sequence of abelian groups 
$$0\to \mathbb{Z}_2 (R)\to \mu_2(R)\to {\sf Im}(\mu_2(R/4R)\to \mu_2(R/2R))\to \sQu(R)\to {\sf Dis}(R)\to {\sf Dis}(R/4R).$$
Here 
$$\mu_2(R)=\{r\in R| r^2=1\} \text{ and } \mathbb{Z}_2 (R)=\{r\in R| r^2=r\}.$$ 

The operation in $\mathbb{Z}_2$ is given by $r+_1 s=r+s-2rs$ (see \cite[p. 202]{hahn}). The first nontrivial map  is given by $r\mapsto 1-2r$.
\end{Th}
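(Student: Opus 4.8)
The plan is to recognise all six groups as (hyper)cohomology groups for the fppf topology and to read the whole sequence off from one long exact sequence. First I would record the dictionary: $\mathbb{Z}_2(R)=(\mathbb{Z}/2)(R)$, where the constant group scheme $\mathbb{Z}/2=\mathsf{Spec}(R\times R)$ has as $R$-points the idempotents with symmetric-difference law $e+_1 f=e+f-2ef$; $\mu_2(R)=H^0(\mu_2)$; and $\sQu(R)=H^1(\mathbb{Z}/2)$, ${\sf Dis}(R)=H^1(\mu_2)$. The map $r\mapsto 1-2r$ is induced by the homomorphism of group schemes $j\colon\mathbb{Z}/2\to\mu_2$, $e\mapsto 1-2e$ (so $0\mapsto 1$, $1\mapsto-1$); I would check it is a homomorphism for the stated laws from $(1-2r)(1-2s)=1-2(r+_1 s)$, and recall that the induced map $j_*\colon\sQu(R)\to{\sf Dis}(R)$ is the discriminant map of the Bass--Hahn sequence. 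Injectivity of $\mathbb{Z}_2(R)\to\mu_2(R)$, hence exactness at $\mathbb{Z}_2(R)$, is immediate when $2$ is a non-zero-divisor. Exactness at $\mu_2(R)$ I would get by the direct computation $\ker(\mu_2(R)\to\mu_2(R/2R))=\{1+2a:\ a^2=-a\}=\{1-2e:\ e^2=e\}$, using that $(1+2a)^2=1$ forces $4a(1+a)=0$, whence $a(1+a)=0$ since $2$, and so $4$, is a non-zero-divisor.

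Next I would bring in the machinery. The morphism $j$ induces a morphism of Picard stacks of torsors $\mathbb{Z}/2\text{-}\mathrm{Tors}\to\mu_2\text{-}\mathrm{Tors}$; equivalently one works with the two-term complex $K=[\mathbb{Z}/2\xrightarrow{\,j\,}\mu_2]$ and its fppf hypercohomology. Since stackification is an exact $2$-functor, the long exact sequence of homotopy groups of the homotopy fibre (kernel) reads
\[
0\to\mathbb{H}^{-1}(K)\to\mathbb{Z}_2(R)\to\mu_2(R)\to\mathbb{H}^{0}(K)\to\sQu(R)\xrightarrow{\ \mathrm{disc}\ }{\sf Dis}(R),
\]
with $\mathbb{H}^{-1}(K)=H^0(\ker j)=\{e:\ e^2=e,\ 2e=0\}=0$. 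I would then splice the Bass--Hahn sequence $\sQu(R)\to{\sf Dis}(R)\to{\sf Dis}(R/4R)$ on the right, which already supplies exactness at ${\sf Dis}(R)$. Everything therefore reduces to the single identification $\mathbb{H}^{0}(K)\cong{\sf Im}(\mu_2(R/4R)\to\mu_2(R/2R))$, together with the verification that the incoming map from $\mu_2(R)$ is reduction modulo $2$.

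The crux is that identification, and here the rings $R/2R$ and $R/4R$ enter. I would make the relevant map concrete as follows. Given $w\in\mu_2(R/4R)$, lift it to $b\in R$ with $b^2\equiv1\pmod 4$, set $c=(b^2-1)/4\in R$ (well defined because $4$ is a non-zero-divisor), and form the free separable quadratic algebra $S_w=R[x]/(x^2-bx+c)$, whose discriminant $b^2-4c=1$ is a square, so that $[S_w]\in\ker(\mathrm{disc})$. A short change-of-variable check ($x\mapsto x\pm s$ and $x\mapsto x\pm u$) shows $S_w$ is unchanged when $b$ is altered by $4R$ and also when $b$ is altered by $2R$, so $[S_w]$ depends only on the image $v\in\mu_2(R/2R)$ of $w$; this is exactly why the source is ${\sf Im}(\mu_2(R/4R)\to\mu_2(R/2R))$ and not $\mu_2(R/4R)$. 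This map is the boundary map in the long exact sequence, so it is a homomorphism. Its kernel is found by noting that $S_w$ splits iff $x^2-bx+c$ has a root $r\in R$; then $2r-b\in\mu_2(R)$ and $b\equiv-(2r-b)\pmod 2$, so $v\in{\sf Im}(\mu_2(R)\to\mu_2(R/2R))$, and conversely. Since ${\sf Im}(\mu_2(R)\to\mu_2(R/2R))$ is precisely the image of the preceding reduction map, this yields exactness at the image term.

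Finally, for exactness at $\sQu(R)$ I would show every class in $\ker(\mathrm{disc})$ has the form $[S_w]$: triviality of the discriminant module forces the algebra to be free, and after the substitution $x\mapsto d^{-1}x$, which scales the discriminant $\delta$ by $d^{-2}$, one normalises $\delta=d^2$ to $1$, exhibiting the algebra as $R[x]/(x^2-bx+(b^2-1)/4)=S_w$ with $w=\bar b\in\mu_2(R/4R)$. The main obstacle I anticipate is exactly this normalisation: controlling that a separable quadratic algebra with trivial discriminant module is free of the stated shape (i.e. that splitting the rank-one discriminant module trivialises the complementary summand of $R\cdot 1$), and, equivalently, pinning down $\mathbb{H}^0(K)$ via the fppf descent that separates ``square root of unity modulo $2$'' from ``modulo $4$'' — the $2$-primary obstruction that the entire left end of the sequence measures. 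The non-zero-divisor hypothesis on $2$ is used throughout to make the integral lift $c=(b^2-1)/4$ unique and to kill the ambiguities above.
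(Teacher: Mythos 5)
Your overall skeleton is sound but genuinely different from the paper's. The paper never takes the fibre of $\Qu\to\Dis$: it presents $\Qu$, $\Dis$ and a third stack $\Dis^{-2,1}$ as Zariski stackifications of explicit prestacks ($\Qu_f$, $\G$, and the cat-group $\G^{-2,1}$ built from $Sq:(R/2R)^*\to(R/4R)^*$), proves by direct manipulation of the presentations $[a,b]$, $(u,r)$ that $\Qu_f(R)\to 2\text{-}\ker(\G(R)\to\G^{-2,1}(R))$ is an equivalence when $2$ is a non-zero-divisor, and then reads off all six terms at once from the sequence (\ref{ex_gz}) applied to $\beta:\Dis\to\Dis^{-2,1}$; in particular $\pi_1(\Dis^{-2,1}(R))\cong{\sf Im}(\mu_2(R/4R)\to\mu_2(R/2R))$ and $\pi_0(\Dis^{-2,1}(R))\cong{\sf Dis}(R/4R)$ fall out of the same computation, so exactness at ${\sf Dis}(R)$ is reproved rather than imported. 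You instead take the fibre of $j_*:\Qu\to\Dis$, extract the four left-hand exactness statements from its six-term sequence, and splice the Bass--Hahn sequence on the right. Your route makes most verifications concrete manipulations of quadratic polynomials at the level of the projective objects themselves; what it loses is the uniform treatment of the right-hand end and the essentially free generalization to arbitrary $(p,q)$.

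There is one genuine gap. You announce that everything reduces to identifying $\mathbb{H}^0(K)$ with ${\sf Im}(\mu_2(R/4R)\to\mu_2(R/2R))$, but you never carry that identification out; you replace it by the explicit assignment $v\mapsto[S_w]$ and check exactness at the two middle spots by hand. That substitution is legitimate only if $v\mapsto[S_w]$ is (i) a group homomorphism and (ii) the connecting map of your six-term sequence. You justify (i) by asserting (ii), and (ii) is exactly the identification you set aside, so the homomorphism property is currently unproved. To close the gap you must either verify $S_{vv'}\cong S_v\star S_{v'}$ directly --- in the paper's notation this is the computation $[1,c]\star[1,c']=[1,c+c'+4cc']$ combined with the change of variables relating $b\mapsto(b^2-1)/4$ to multiplication in $\mu_2(R/2R)$ --- or genuinely compute $\mathbb{H}^0$ of the complex $[\mathbb{Z}/2\to\mu_2]$, which is where $R/2R$ and $R/4R$ must be extracted and is no easier than the paper's Lemma \ref{22}. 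A smaller point: your surjectivity argument at $\sQu(R)$ uses that triviality of the discriminant class forces the algebra to be free; this is correct, but only because the underlying module of the discriminant of $A=R\cdot 1\oplus V$ is $V$ itself, so that the sequence $0\to\sU_2(R)\to{\sf Dis}(R)\to{}_2{\sf Pic}(R)\to 0$ kills the class of $V$ in ${\sf Pic}(R)$; this deserves a sentence, since with the discriminant module taken to be $V^{\otimes 2}$ the step would fail.
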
  

In order to obtain the above, we are going to define a symmetric categorical group $\Qu(R)$ such that the groups $\sQu(R)$ and $\mathbb{Z}_2(R)$ are its $\pi_0$ and $\pi_1$ respectively (see Section \ref{cgr} for symmetric categorical groups). Similarly, behind of the groups ${\sf Dis}(R)$ and $\mu_2(R)$ there is a symmetric categorical group $\Dis(R)$, while the groups ${\sf Dis}(R/4R)$ and ${\sf Im}(\mu_2(R/4R)\to \mu_2(R/2R))$ are obtained from the symmetric  categorical group $\Dis^{-2,1}(R)$. Hence, it suffices to show that $\Qu(R)$, $\Dis(R)$ and $\Dis^{-2,1}(R)$ fit in an exact sequence of symmetric categorical groups, and then apply the classical six-term exact sequence \cite[p.84] {gz} involving $\pi_i$, $i=0,1$. The symmetric categorical groups $\Qu(R)$ and $\Dis(R)$ are classical and they  appear (at least implicitly) in many papers, for example in \cite{hahn}. However, $\Dis^{-2,1}(R)$  is new. The next observation is that, by varying $R$, these symmetric categorical groups form stacks in the Zariski topology. 

As it turns out, they can be seen as the stackifications of the analogue prestacks of symmetric categorical groups, based on free modules instated of projective ones. It is a fairly easy exercise to show that the 'free' versions of the above symmetric categorical groups fit in an exact sequence. Since stackification sends exact sequences to exact ones, the result then follows.   
\newline

Actually, we will work in a more general setting than that of separable quadratic algebras. In fact, we will fix a commutative and cocommutative Hopf algebra $J$, which is free of rank $2$ as an $R$-module. It is a classical fact that any such Hopf algebra is uniquely determined by a pair $(p,q)$ of elements of $R$, such that $pq+2=0$ (see \cite[Theorem 1.2]{kr}). Our main object of interest will be the group $\sQu^{pq}(R)$ of $J$-Galois algebras, with our main result being Theorem \ref{exactpq}. Since $\sQu(R)=\sQu^{-2,1}(R)$,  Theorem \ref{exact} is a specialization of Theorem \ref{exactpq}.
    
\section{Cat-groups} \label{cgr}
Recall that a \emph{symmetric categorical group}, or \emph{cat-group} for short, is a groupoid $\G$, equipped with a symmetric monoidal structure $+:\G\times\G\to \G$, such that for each object $X$, the endofunctor $(-)+X:\G\to \G$ is an equivalence of categories \cite{v}. In particular, one has an object $Y$ and an isomorphism $X+Y\to 0$, where $0$ is the neutral object with respect to the symmetric monoidal structure $+$.  
If $\G$ and $\H$ are cat-groups, a morphism $\alpha:\G\to \H$ is just  a symmetric monoidal functor. 

Let  $\G$ be a cat-group. The monoidal structure induces an abelian group structure on the set of connected components. This abelian group is denoted by $\pi_0(\G)$. We also have an abelian group $\pi_1(\G)$, which is the group of automorphisms of the neutral object of $\G$. Since the functor $(-)+X:\G\to \G$ is an equivalence of categories for every object $X$, the  induced homomorphism $\pi_1(\G)\to Aut_\G(X)$ is an isomorphism. Because of this, we will identify the automorphism group of any object with $\pi_1(\G)$. 
Recall also that if $\alpha:\G\to \H$ is a morphism of cat-groups, than $\alpha$ is an equivalence if and only if the induced morphisms $$\pi_i(\G)\to\pi_i(\H), \ \ i=0,1$$
are isomorphisms.

We also note that if $\alpha:\G\to \H$ is a morphism of cat-groups, than one can define the cat-group $2\text{-}\ker(\alpha)$. 
Objects of $2\text{-}\ker(\alpha)$ are pairs $(X,x)$, where $X$ is an object of $\G$, while $x:0\to\alpha(X)$ is a morphism in $\H$. A morphism $(X,x)\to (Y,y)$ in $2\text{-}\ker(\alpha)$ is a morphism $f:X\to Y$ in $\G$, such that $y=\alpha(f)x$. 
It is also well-known (\cite[p. 84] {gz}, \cite{v}) that there is an exact sequences of abelian groups
\begin{equation}\label{ex_gz}
0\to\pi_1(2\text{-}\ker(\alpha))\to \pi_1(\G)\to \pi_1(\H)\to \pi_0(2\text{-}\ker(\alpha))\to \pi_0(\G)\to \pi_0(\H).
\end{equation}
Recall the definition of an exact sequence of cat-groups \cite{v}. A diagram
$$
\xymatrix{ \G\ar[r]_{\alpha}\rruppertwocell<12>^{0}{^\kappa}
&\H\ar[r]_{\beta}&\K}
$$
is called \emph{exact} at $\H$, provided the induced functor $\gamma:\G\to 2\text{-}\ker(\beta)$ is essentially surjective. More precisely, 
$$\gamma(X)=(\alpha(X),\kappa(X): \beta\alpha (X)\to 0).$$ 
It follows that one has an exact sequence of abelian groups
$$\pi_0(\G)\to \pi_0(\H)\to \pi_0(\K).$$

If we are given a morphism $\alpha:G\rightarrow H$ of abelian groups, than we can make a cat-group $\G_{\alpha}$ whose objects are the elements of $H$, and for $h_1,h_2\in H$, we have 
$$\Hom(h_1,h_2)=\{g\in G|h_2+\alpha(g)=h_1\}.$$ 
The monoidal structure is induced by the additive group structure on $G$ and $H$. We have $\pi_0(\G_\alpha)=\cok(\alpha)$ and $\pi_1(\G_\alpha)=\ker(\alpha)$. 
 
 Let $\alpha:G_0\rightarrow G_1$ and $\beta:H_0\rightarrow H_1$ be morphism of abelian groups and $\G_\alpha$ and $\G_\beta$ their associated cat-groups. Observe that any  commutative diagram  of abelian groups and group homomorphisms
$$\xymatrix{G_0\ar[r]\ar[d]_\alpha & H_0\ar[d]^\beta \\ 
			G_1\ar[r] & H_1 }$$
gives rise to  a morphism of the abelian cat-groups $\G_\alpha\rightarrow \G_\beta$. We denote by $\mathsf{Pb}$ the pullback of the diagram 
$$\xymatrix{ & H_0\ar[d]^\beta \\ 
	G_1\ar[r] & H_1 }$$
The following result is well known:
\begin{Pro}\label{2ker} Let $\G_\alpha$ and $\G_\beta$ be as above. We have an equivalence of cat-groups 
$$2\text{-}\ker(\G_\alpha\rightarrow \G_\beta)\cong \G_\text{\i},$$
where $\G_\text{\i}$ is the abelian cat-group associated to the morphism \i\ $:G_0\rightarrow \mathsf{Pb}$.
\end{Pro}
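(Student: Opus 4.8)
The plan is to reduce both cat-groups to completely explicit descriptions in terms of the four abelian groups and the given square, and then to write down a comparison functor which is visibly bijective on objects and on morphisms and strictly monoidal, hence an isomorphism (a fortiori an equivalence) of cat-groups. Write $\phi\colon G_0\to H_0$ and $\psi\colon G_1\to H_1$ for the horizontal arrows of the square, so commutativity reads $\beta\phi=\psi\alpha$. The induced functor $F\colon\G_\alpha\to\G_\beta$ then acts by $g_1\mapsto\psi(g_1)$ on objects and by $g_0\mapsto\phi(g_0)$ on morphisms, the latter being well defined since $\psi(g_1')+\beta\phi(g_0)=\psi(g_1'+\alpha(g_0))$. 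By the universal property of the pullback, the map $\text{\i}\colon G_0\to\mathsf{Pb}$ is $\text{\i}(g_0)=(\alpha(g_0),\phi(g_0))$, with projections $\alpha$ and $\phi$.

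First I would unwind $2\text{-}\ker(F)$. An object is a pair $(g_1,x)$ with $g_1\in G_1$ and $x\colon 0\to F(g_1)=\psi(g_1)$ a morphism of $\G_\beta$; by definition of $\G_\beta$, such an $x$ is an element $h_0\in H_0$ with $\psi(g_1)+\beta(h_0)=0$. A morphism $(g_1,h_0)\to(g_1',h_0')$ is a morphism $g_0\colon g_1\to g_1'$ of $\G_\alpha$, i.e.\ $g_0\in G_0$ with $g_1'+\alpha(g_0)=g_1$, subject to $y=F(g_0)\,x$; since composition in $\G_\beta$ is addition in $H_0$, this last condition reads $h_0'=h_0+\phi(g_0)$. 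Thus the objects of $2\text{-}\ker(F)$ are the pairs $(g_1,h_0)$ with $\beta(h_0)=-\psi(g_1)$, and the morphisms are the $g_0\in G_0$ with $\alpha(g_0)=g_1-g_1'$ and $\phi(g_0)=h_0'-h_0$.

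On the other side, the objects of $\G_{\text{\i}}$ are the elements of $\mathsf{Pb}=\{(g_1,h_0)\mid\psi(g_1)=\beta(h_0)\}$, and a morphism $(g_1,h_0)\to(g_1',h_0')$ is a $g_0\in G_0$ with $(g_1',h_0')+\text{\i}(g_0)=(g_1,h_0)$, that is $\alpha(g_0)=g_1-g_1'$ and $\phi(g_0)=h_0-h_0'$. Comparing the two, I would define $\Phi\colon 2\text{-}\ker(F)\to\G_{\text{\i}}$ by $(g_1,h_0)\mapsto(g_1,-h_0)$ on objects and by $g_0\mapsto g_0$ on morphisms. The object assignment is a well-defined bijection because $\beta(h_0)=-\psi(g_1)$ is equivalent to $\beta(-h_0)=\psi(g_1)$, and the sign flip turns the $2\text{-}\ker$ relation $\phi(g_0)=h_0'-h_0$ into $\phi(g_0)=(-h_0)-(-h_0')$, which is exactly the defining relation of a morphism of $\G_{\text{\i}}$ between $(g_1,-h_0)$ and $(g_1',-h_0')$. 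Hence $\Phi$ is bijective on objects and identical on hom-sets, so it is a bijective-on-objects, fully faithful functor.

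It remains to check that $\Phi$ is monoidal. Both monoidal structures are componentwise addition, on $G_1\times H_0$ for $\G_{\text{\i}}$ and inherited from $\G_\alpha$ and $\G_\beta$ for the $2$-kernel, and negation on the $H_0$-coordinate is additive and fixes the neutral object; the coherence isomorphisms are identities on both sides. Therefore $\Phi$ commutes strictly with $+$ and is an isomorphism, in particular an equivalence, of cat-groups. The only point genuinely requiring care, and the step I would regard as the main obstacle, is keeping the two sign conventions aligned: the relation defining a morphism $0\to F(X)$ in $\G_\beta$ forces $\beta(h_0)=-\psi(g_1)$, whereas the pullback is cut out by $\beta(h_0)=\psi(g_1)$, and the morphism compatibilities carry opposite signs as well. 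The coordinate flip $h_0\mapsto -h_0$ reconciles all of these simultaneously, which is why it, rather than the naive identity, is the correct comparison.
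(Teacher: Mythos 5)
Your argument is correct and complete. The paper itself offers no proof of Proposition \ref{2ker} (it is simply declared well known), so there is nothing to compare against; your explicit unwinding of both sides is exactly the standard verification one would supply. The one genuinely delicate point --- that the $2$-kernel condition $\beta(h_0)=-\psi(g_1)$ and the morphism relation $\phi(g_0)=h_0'-h_0$ carry signs opposite to those cutting out $\mathsf{Pb}$ and defining morphisms in $\G_{\text{\i}}$, so that the comparison functor must negate the $H_0$-coordinate rather than be the naive identity --- is identified and handled correctly, and the strict monoidality of the resulting isomorphism is immediate since all structures involved are componentwise addition.
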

\begin{Co}\label{liftedker}
	Let $\alpha:G\rightarrow H$ be a homomorphism of abelian groups and denote the associated cat-groups of $G\rightarrow 0$ and $H\rightarrow 0$ by $\G_G$ and $\G_H$ respectively. There is an equivalence of cat-groups
	$$2\text{-}\ker(\G_G\rightarrow \G_H)\xrightarrow[\sim]{\alpha_*}\G_\alpha.$$
\end{Co}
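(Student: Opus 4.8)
The plan is to deduce this directly from Proposition \ref{2ker} by specialising to the case in which the bottom row of the defining square is trivial. Concretely, I would set $G_0=G$, $G_1=0$, $H_0=H$ and $H_1=0$ in the notation of that proposition, so that its cat-group $\G_\alpha$ becomes $\G_G$ and its $\G_\beta$ becomes $\G_H$. The commutative square producing the morphism $\G_G\to\G_H$ is then the one whose top horizontal arrow is $\alpha\colon G\to H$ and whose remaining three arrows are the (unique) maps to and between zero groups. On automorphism groups this functor induces $\alpha\colon G=\pi_1(\G_G)\to\pi_1(\G_H)=H$, so it is exactly the functor $\alpha_*$ appearing in the statement.

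First I would compute the pullback $\mathsf{Pb}$. By definition it is the pullback of $G_1\to H_1\leftarrow H_0$, which here reads $0\to 0\leftarrow H$. Since the target $H_1=0$ is trivial, the projection $\mathsf{Pb}\to H_0=H$ is an isomorphism, so we may canonically identify $\mathsf{Pb}\cong H$. Next I would identify the comparison map $\text{\i}\colon G_0\to\mathsf{Pb}$. By the universal property of the pullback, $\text{\i}$ is determined by the pair of maps $G\to G_1=0$ and $G\to H_0=H$, the latter being the top arrow $\alpha$ of the square. Under the identification $\mathsf{Pb}\cong H$ this forces $\text{\i}=\alpha$, and hence $\G_{\text{\i}}=\G_\alpha$.

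Combining these two identifications with Proposition \ref{2ker} yields
$$2\text{-}\ker(\G_G\to\G_H)\cong\G_{\text{\i}}=\G_\alpha,$$
which is the desired equivalence. To see that it is realised by $\alpha_*$, as asserted by the labelled arrow in the statement, I would, as a sanity check, unwind the explicit description of the $2$-kernel: since $\G_G$ and $\G_H$ each have a single object, an object of $2\text{-}\ker(\G_G\to\G_H)$ is a pair $(\ast,x)$ with $x\in H=\pi_1(\G_H)$, and a morphism $(\ast,x)\to(\ast,y)$ is a $g\in G$ with $y=x+\alpha(g)$. Comparing this with the objects (the elements of $H$) and hom-sets of $\G_\alpha$ recovers the same cat-group, the comparison functor being the one induced by $\alpha$.

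I expect the only genuinely delicate point to be bookkeeping: making sure that the trivialisation $\mathsf{Pb}\cong H$ and the identification $\text{\i}=\alpha$ are the canonical ones, and that they match the direction and sign conventions built into the definitions of $\G_\alpha$ and of the $2$-kernel (the hom-set condition $y=x+\alpha(g)$ matches that of $\G_\alpha$ only after the customary reversal $g\mapsto -g$). Once these conventions are pinned down, everything else is a formal consequence of the already-established Proposition \ref{2ker}.
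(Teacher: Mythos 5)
Your proposal is correct and matches the paper's (implicit) argument: Corollary \ref{liftedker} is stated without proof precisely because it is the specialization of Proposition \ref{2ker} to $G_0=G$, $G_1=0$, $H_0=H$, $H_1=0$, where $\mathsf{Pb}\cong H$ and $\text{\i}$ becomes $\alpha$. Your attention to the sign/direction convention in the hom-sets is the right sanity check and does not affect the conclusion.
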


\section{Free J-Galois Algebras}

\subsection{The cat-group $\Qu_f^{pq}(R)$} Let $R$ be a commutative ring and $p$ and $q$ two elements of $R$ such that $pq+2=0$. These elements will be fixed during this section. Of special interest is the case $p=-2$, $q=1$.

Objects of the groupoid $\Qu^{pq}_f(R)$ are pairs  
$[a,b]$, where $a,b\in R$ and $a^2+p^2b\in R^*$. Morphisms $[c,d]\to [a,b]$ are pairs $(u,r)$,  such that $r\in R, u\in R^*$, and the following relations hold:
\begin{equation}\label{41}
c=au-pr \ \ \ \ {\rm and} \ \ \ d=u^2b-qrua-r^2.
\end{equation}
The composition of morphisms is defined by
$$(v,s)\circ (u,r)=(vu, rv+s).$$ 
The morphism $(1,0):[a,b]\to [a,b]$ is the identity morphism of $[a,b]$.
We define the functor 
$$\star:\Qu_f^{pq}(R)\times \Qu_f^{pq}(R)\to \Qu_f^{pq}(R)$$ 
as follows: On objects we have
$$[a_1,b_1]\star[a_2,b_2]=[a_1a_2,a_1^2b_2+a_2^2b_1+p^2b_1b_2],$$
where we write $[a_1,b_1]\star[a_2,b_2]$ instead of $\star([a_1,b_1],[a_2,b_2])$, and on morphisms $\star$ is given by 
$$(u_1,r_1)\star (u_2,r_2)=(u_1u_2,-pr_1r_2+r_1u_2a_2+u_1a_1r_2).$$
One checks that $\star$ defines a symmetric monoidal structure on $\Qu_f^{pq}(R)$. The neutral object is $[1,0]$.
Since the pair $(a^2+p^2b,pb)$ defines a morphism $ [a,b]\star[a,b]\to [1,0]$, the symmetric monoidal groupoid $\Qu_f^{pq}(R)$ is in fact a cat-group.
Consider the set
$$\mathbb{Z}_{pq}(R)=\{r\in R | r^2=qr\},$$
which is considered as an abelian group via the operation $r+_1 s=r+s+prs$. It is clear that the map $r\mapsto (1+pr,r)$ yields an isomorphism of groups 
$$\mathbb{Z}_{pq}(R)\xto{\simeq} \pi_1(\Qu_f^{pq}(R)).$$ 
We let $\sQu^{pq}_f(R)$ be the group of isomorphism classes of $\Qu^{pq}_f(R)$, that is $\sQu^{pq}_f(R)=\pi_0(\Qu^{pq}_f(R))$. 
 
Observe that if $p=-2$ and $q=1$, the above relations are exactly the same as in \cite[pp. 29-30]{hahn}. Hence, $\sQu^{-2,1}_f(R)$ is the same as the \emph{free quadratic  group} introduced in \cite[p. 31]{hahn}, where it is denoted by $\sQu_f(R)$. Recall that the group $\sQu_f(R)$ is the group of isomorphism classes of free separable $R$-algebras of rank two \cite[Ch. 3A] {hahn}. We will see in Proposition \ref{43} that the groups $\sQu^{pq}(R)$ have a similar interpretation for all $(p,q)$. See \cite{hahn} for an extensive study of $\sQu_f(R)$. The group $\mathbb{Z}_{-2,1}(R)$ is the group of idempotents of $R$ and it appears in many places, see for example \cite[p. 202]{hahn}, where it is denoted by $\mathbb{Z}_2(R)$.
 
\begin{Pro}[An equivalence of categories]\label{eqcat} 
Let $(p,q)$ be a pair of elements in $R$ such that $pq+2=0$. For any invertible element $t\in R^*$, there is an equivalence of categories
$$t_*:\Qu_f^{pq}(R)\to \Qu_f^{pt,t^{-1}q}(R).$$
\end{Pro}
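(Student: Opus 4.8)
The plan is to exhibit an explicit functor $t_*$, check that it is well defined and functorial, and then produce an explicit inverse, so that $t_*$ turns out to be an isomorphism of categories (hence, a fortiori, an equivalence). First set $p'=pt$ and $q'=t^{-1}q$; since $p'q'=pt\cdot t^{-1}q=pq=-2$, the pair $(p',q')$ again satisfies the admissibility condition, so the target category $\Qu_f^{pt,t^{-1}q}(R)$ is legitimate.

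On objects I would define $t_*[a,b]=[a,t^{-2}b]$ and on morphisms $t_*(u,r)=(u,t^{-1}r)$. The first thing to verify is that $t_*$ lands in the correct category: the defining unit condition for $[a,t^{-2}b]$ in $\Qu_f^{pt,t^{-1}q}(R)$ reads $a^2+(pt)^2(t^{-2}b)=a^2+p^2b$, so $[a,b]$ is admissible in the source exactly when $t_*[a,b]$ is admissible in the target. Next I would check that $(u,t^{-1}r)$ really is a morphism $[c,t^{-2}d]\to[a,t^{-2}b]$ in $\Qu_f^{pt,t^{-1}q}(R)$. Substituting $p'=pt$, $q'=t^{-1}q$ and $r'=t^{-1}r$ into the two relations \eqref{41} of the target, the factor of $t$ cancels in the first relation, returning $au-pr=c$, while a common factor $t^{-2}$ factors out of the second, returning $t^{-2}(u^2b-qrua-r^2)=t^{-2}d$. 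This small computation is the heart of the matter, and it is the only place where the powers $t^{-2}$ on the second object coordinate and $t^{-1}$ on the $r$-coordinate are forced; getting this bookkeeping right is essentially the whole content of the proof, and I expect it to be the only real obstacle.

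Functoriality is then routine. On identities $t_*(1,0)=(1,0)$, and from the composition law $(v,s)\circ(u,r)=(vu,rv+s)$ one computes $t_*((v,s)\circ(u,r))=(vu,t^{-1}(rv+s))$, which agrees with $t_*(v,s)\circ t_*(u,r)=(v,t^{-1}s)\circ(u,t^{-1}r)=(vu,(t^{-1}r)v+t^{-1}s)$ because the $u$-coordinate is untouched and $t^{-1}$ is additive.

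Finally, to conclude I would observe that applying the identical construction to $t^{-1}$ yields a functor $(t^{-1})_*:\Qu_f^{pt,t^{-1}q}(R)\to\Qu_f^{pq}(R)$ given by $[a,b]\mapsto[a,t^2b]$ and $(u,r)\mapsto(u,tr)$. Since $t^2\cdot t^{-2}=1$ and $t\cdot t^{-1}=1$, the two composites are the identity functors on the nose, so $t_*$ is in fact an isomorphism of categories, which in particular gives the asserted equivalence.
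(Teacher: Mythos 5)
Your proof is correct; every step checks out (the object condition $a^2+(pt)^2(t^{-2}b)=a^2+p^2b$, the two morphism relations with $p'=pt$, $q'=t^{-1}q$, $r'=t^{-1}r$, functoriality, and the strict inverse $(t^{-1})_*$). However, the functor you build is not the one the paper uses: immediately after the statement the paper defines $t_*([a,b])=[at,b]$ and $t_*(u,r)=(u,r)$, i.e.\ it rescales the first object coordinate and leaves the morphism data untouched, with no further verification given, whereas you fix $a$ and instead rescale $b$ by $t^{-2}$ and $r$ by $t^{-1}$. Both recipes are isomorphisms of categories (the paper's inverse is $[a',b']\mapsto[a't^{-1},b']$), so either proves the proposition as stated. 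A point in favour of your variant is that it is \emph{strictly} compatible with the monoidal structure $\star$: a short computation shows $t_*([a_1,b_1]\star[a_2,b_2])=t_*[a_1,b_1]\star t_*[a_2,b_2]$ on objects and likewise on morphisms, while the paper's functor gives $t_*([a_1,b_1]\star[a_2,b_2])=[a_1a_2t,\,\cdot\,]$ against $t_*[a_1,b_1]\star t_*[a_2,b_2]=[a_1a_2t^2,\,\cdot\,]$ and therefore only becomes monoidal after inserting the coherence isomorphism $(t,0)$. Since Proposition \ref{eqcat} is later combined with Proposition \ref{43} to produce an equivalence of cat-groups $\Qu^{pq}(R)\to \Qu^{pt,t^{-1}q}(R)$, your strictly monoidal version is, if anything, slightly more convenient, and it also supplies the explicit verification that the paper omits.
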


On objects, the functor $t_*$ is given by
$$t_*([a,b])=[at,b],$$
while on morphism it is given by $t_*(u,r)=(u,r)$.

\subsection{The Cat-group $\G(R)$ and the morphism $\alpha_f:\Qu_f^{pq}(R)\to \G(R)$}

Consider  the homomorphism of abelian groups   
$$R^*\xto{sq} R^*, \ \ sq(r)= r^2.$$ 
Denote its associated cat-group by $\G(R)$. We have $$\pi_0(\G(R))=\sU_2(R) \ \ \ {\rm and} \ \ \ \ \ \pi_1(\G(R))=\mu_2(R),$$
where $$\sU_2(R)=R^*/(R^*)^2 \ \ \ \ \ \ {\rm and} \ \ \ \mu_2(R)=\{r\in R| r^2=1\}.$$
Define the morphism of categorical groups
$$\alpha_f:\Qu_f^{pq}(R)\to \G(R)$$
as follows: For an object $[a,b]$ of $\Qu_f^{pq}(R)$ one puts
$$\alpha_f([a,b])=a^2+p^2b.$$
If  $(u,r): [c,d]\to [a,b]$  is a morphism in $\Qu_f(R)$, then we have 
$$c^2+p^2d=(au-pr)^2+p^2(u^2b-qrua-r^2)=u^2(a^2+p^2b)$$ 
thanks to  equalities in (\ref{41}). Thus, $u$ can be  considered as a morphism $ c^2+4d \to a^2+4b$ in $\G(R)$ and we put
$$\alpha_f(u,r)=u: \alpha([c,d])\to \alpha([a,b]).$$ 

The morphism of cat-groups $\alpha_f:\Qu_f^{pq}(R)\to \G(R)$ induces the homomorphisms of abelian groups
$$\mathbb{Z}_{pq}(R)=\pi_1(\Qu_f(R))\to\pi_1(\G(R))=\mu_2(R)$$
and
$$\sQu_f^{pq}(R)\to \sU_2(R).$$
The first homomorphism is given by $a\mapsto 1+pa$. It is well known that for $p=-2, q=1$ the second homomorphism fits in the exact sequence (see \cite[(3.6) on p.32]{hahn})
\begin{equation}\label{2}\sQu_f(R)\to \sU_2(R)\to \sU_2(R/4R).\end{equation}
We wish to extend this sequence on the left hand side (for arbitrary $(p,q)$), as well as lift it to the level of cat-groups. This will yield analogous exact sequences after stackification. Observe that the obvious candidate
$$\Qu_f^{pq}(R)\to \G(R)\to \G(R/p^2R)$$
does not work, as the composite functor $\Qu_f^{pq}(R)\to \G(R/p^2R)$ is not the trivial one, not even when $p=-2, q=1$. As such, we have to modify the last cat-group. 

\subsection{The Cat-group $\G^{pq}(R)$ and the morphism $\beta_f:\G(R)\to \G^{pq}(R)$} 

We will need the following easy and well-known fact.
\begin{Le} Let $R$ be a commutative ring and $r\in (R/pR)^*$. If $\tilde{r}\in R/p^2R$ is a lifting of $r$, then $\tilde{r}\in (R/p^2R)^*$. Moreover, there is a well-defined homomorphism  $$Sq:(R/pR)^*\to (R/p^2R)^*$$ given by $Sq(r)=\tilde{r}^2$. 
\end{Le}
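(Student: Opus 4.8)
The plan is to exploit the fact that the kernel of the reduction map $\pi:R/p^2R\to R/pR$ is the ideal $I=pR/p^2R$, which is square-zero, since $I^2\subseteq p^2R/p^2R=0$. All three assertions will then follow from this, together with the standing relation $pq+2=0$.

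First I would treat the lifting statement. Given $r\in(R/pR)^*$ with inverse $s$, let $\tilde{r}$ be the prescribed lift and choose any lift $\tilde{s}$ of $s$. Then $\pi(\tilde{r}\tilde{s})=rs=1$, so $\tilde{r}\tilde{s}=1+x$ with $x\in I$. Because $x^2=0$, the element $1+x$ is a unit with inverse $1-x$, whence $\tilde{r}\cdot\bigl(\tilde{s}(1-x)\bigr)=1$. This exhibits an inverse of $\tilde{r}$, so $\tilde{r}\in(R/p^2R)^*$; as the argument works for an arbitrary lift, every lift of a unit is again a unit.

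Next I would establish the well-definedness of $Sq$, which is the crux of the lemma. If $\tilde{r}_1$ and $\tilde{r}_2$ are two lifts of the same $r$, then $\tilde{r}_2=\tilde{r}_1+x$ with $x\in I$, so
$$\tilde{r}_2^{\,2}-\tilde{r}_1^{\,2}=2\tilde{r}_1 x+x^2=2\tilde{r}_1 x,$$
using $x^2=0$. The essential point is that $2$ annihilates $I$: writing $x$ as the class of $pa$ for some $a\in R$, the relation $pq+2=0$ gives $2x=2pa=-p^2qa\equiv 0\pmod{p^2R}$. Hence $2\tilde{r}_1 x=0$ and $\tilde{r}_1^{\,2}=\tilde{r}_2^{\,2}$, so $Sq(r):=\tilde{r}^{\,2}$ is independent of the chosen lift. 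By the first part $\tilde{r}$, and therefore $\tilde{r}^{\,2}$, is a unit, so $Sq$ indeed takes values in $(R/p^2R)^*$.

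Finally, multiplicativity is immediate: for $r,r'\in(R/pR)^*$ with lifts $\tilde{r},\tilde{r}'$, the product $\tilde{r}\tilde{r}'$ is a lift of $rr'$, so by well-definedness $Sq(rr')=(\tilde{r}\tilde{r}')^2=\tilde{r}^{\,2}\tilde{r}'^{\,2}=Sq(r)Sq(r')$, and clearly $Sq(1)=1$. The main obstacle is the well-definedness step, and its decisive input is precisely the hypothesis $pq+2=0$, which forces $2p\equiv 0\pmod{p^2}$; without this, the cross term $2\tilde{r}_1 x$ need not vanish and $Sq$ would depend on the chosen lift.
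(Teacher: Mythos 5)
Your proof is correct and follows essentially the same route as the paper's: invert $1+px$ using $p^2=0$ to get the lifting statement, and kill the cross term $2\tilde{r}x$ in the square of a second lift via $2p=-p^2q\equiv 0 \pmod{p^2R}$. You additionally spell out multiplicativity and that $Sq$ lands in the units, which the paper leaves implicit.
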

\begin{proof} If $r$ is invertible and $s=r^{-1}$ in $R/pR$, then $\tilde{r}\tilde{s}=1+px$ for some $x\in R/p^2R$. Here $\tilde{s}$ is a lifting of $s$ in $R/p^2R$ and we obtain $\tilde{r}\tilde{s}(1-px)=1$. Hence, $\tilde{r}$ is invertible. To show the last assertion, observe that for any $x\in R/p^2R$, one has $(\tilde{r}+px)^2=\tilde{r}^2+2prx=\tilde{r}^2$. This is because $2=-pq$ and $p^2=0$ in $R/p^2$. 
\end{proof}

Let  $\G^{pq}(R)$ be  the cat-group corresponding to the homomorphism $Sq:(R/pR)^*\to (R/p^2R)^*.$ \begin{Le}\label{22}
We have $$\pi_0(\G^{pq}(R))=\sU_2(R/p^2R)$$  and
$$ \pi_1(\G^{pq}(R))=\ker( (R/pR)^*\xto{Sq} (R/p^2R)^*) \cong {\sf Im}(\mu_2(R/p^2R)\to \mu_2(R/pR)).$$
\end{Le}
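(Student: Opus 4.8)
The plan is to reduce everything to the general description of the homotopy groups of the abelian cat-group $\G_\alpha$ attached to a homomorphism of abelian groups, namely $\pi_0(\G_\alpha)=\cok(\alpha)$ and $\pi_1(\G_\alpha)=\ker(\alpha)$, applied to $\alpha=Sq:(R/pR)^*\to (R/p^2R)^*$. This gives at once $\pi_0(\G^{pq}(R))=\cok(Sq)$ and $\pi_1(\G^{pq}(R))=\ker(Sq)$, so the whole task is to identify this cokernel and this kernel with the groups named in the statement.

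For $\pi_0$, I would show that $\mathsf{Im}(Sq)=\big((R/p^2R)^*\big)^2$, whence $\cok(Sq)=(R/p^2R)^*/\big((R/p^2R)^*\big)^2=\sU_2(R/p^2R)$ by definition of $\sU_2$. The inclusion $\mathsf{Im}(Sq)\subseteq \big((R/p^2R)^*\big)^2$ is immediate, since $Sq(r)=\tilde r^2$ for any lifting $\tilde r$. For the reverse inclusion I would invoke the preceding lemma twice: first, every lifting of a unit is a unit, so the reduction $(R/p^2R)^*\to (R/pR)^*$ is surjective; second, $Sq$ is independent of the choice of lifting. Then, given $s\in (R/p^2R)^*$ with image $r\in (R/pR)^*$, the element $s$ is itself a lifting of $r$, so $s^2=Sq(r)\in\mathsf{Im}(Sq)$, giving $\big((R/p^2R)^*\big)^2\subseteq\mathsf{Im}(Sq)$.

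For $\pi_1$, after the identification $\pi_1(\G^{pq}(R))=\ker(Sq)$, I would establish the equality of subgroups $\ker(Sq)=\mathsf{Im}\big(\mu_2(R/p^2R)\to \mu_2(R/pR)\big)$ inside $(R/pR)^*$ by a two-way inclusion. If $r\in\ker(Sq)$, any lifting $\tilde r$ satisfies $\tilde r^2=Sq(r)=1$, so $\tilde r\in\mu_2(R/p^2R)$ reduces to $r$, placing $r$ in the image. Conversely, if $r=\bar w$ with $w\in\mu_2(R/p^2R)$, then using $w$ as a lifting gives $Sq(r)=w^2=1$, so $r\in\ker(Sq)$. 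This yields the stated isomorphism (in fact an honest equality, realised by the reduction map). The computations here are routine; the only point demanding care is the reverse inclusion for $\pi_0$, where one must genuinely use both the surjectivity of reduction and the lifting-independence of $Sq$ supplied by the previous lemma.
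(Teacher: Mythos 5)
Your proposal is correct and follows the same route as the paper: the paper's proof likewise reduces to $\pi_0(\G_\alpha)=\cok(\alpha)$, $\pi_1(\G_\alpha)=\ker(\alpha)$ and rests on the single observation that ${\sf Im}(Sq)={\sf Im}\bigl((R/p^2R)^*\xto{sq}(R/p^2R)^*\bigr)$, declaring the $\pi_1$ identification obvious. You have simply written out the two-way inclusions (using surjectivity of $(R/p^2R)^*\to(R/pR)^*$ and the lifting-independence of $Sq$) that the paper leaves implicit, and these details are accurate.
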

\begin{proof} To see the first isomorphism, it suffices to note that ${\sf Im}(Sq)={\sf Im}((R/p^2R)^*\xto{sq} (R/p^2R)^*)$. The second statement is obvious.
\end{proof}

Since the following diagram of abelian groups
$$\xymatrix{R^*\ar[r]\ar[d]_{sq}& (R/pR)^*\ar[d]^{Sq}\\ R^*\ar[r]&(R/p^2R)^*}$$
commutes, it gives rise to a morphism of  categorical groups $$\beta_f:\G(R)\to \G^{pq}(R).$$

\subsection{The natural transformation  $\delta_f:\beta_f\alpha_f\to 1$} 

Take an object $[a,b]$ in $\Qu_f(R)$. Since $a^2+p^2b\in R^*$, the element $a(mod \ pR)$ is invertible in $R/pR$. As such, it can be considered as a morphism $a^2 ( mod \ p^2R)\to 1$ in $\G^{pq}(R)$. We denote this map by $\delta_f([a,b])$ and obtain $\delta_f([a,b]):(\beta_f\circ \alpha_f)([a,b])\to 1$. Moreover, if $(u,r):[c,d]\to [a,b]$ is a morphism of $\Qu_f^{pq}(R)$, one has $c=ua (mod \ pR)$, thanks to the equations in (\ref{41}) on page \pageref{41}. This means that the following diagram 
$$\xymatrix{c^2(  mod \ p^2R) \ar[rr]^{\delta_f([c,d])} \ar[d]_{u (mod \ pR)}&&1\\
 a^2(mod \ p^2R)\ar[rru]_{\delta_f([a,b]) }&&
}$$
commutes in  $\G^{pq}(R)$.  Thus, $\delta_f: \beta_f\alpha_f\to 1$ is a natural transformation from $\beta_f\circ \alpha_f$  to the trivial functor.
\subsection{$\Qu_f^{pq}(R)$ as a 2-kernel} We can now formulate the main result of this section.
\begin{Th} \label{exact_{fpq}} i) The  sequence of cat-groups
 $$
\xymatrix{ \Qu_f^{pq}(R)\ar[r]_{\alpha_f}\rruppertwocell<12>^{1}{^\delta_f\,}
&\G(R)\ar[r]_{\beta_f}&\G^{pq}(R)
}$$
is exact at $\G(R)$,  that is the induced functor $$\gamma_f:\Qu_f^{pq}(R)\to 2\text{-}\ker(\beta_f)$$ is essentially surjective.

ii) If $p$ is not a zero-divisor in $R$, then $\gamma_f$ is an equivalence of cat-groups.
\end{Th}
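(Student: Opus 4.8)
The plan is to analyze the functor $\gamma_f \colon \Qu_f^{pq}(R) \to 2\text{-}\ker(\beta_f)$ by unwinding what an object and a morphism of the 2-kernel actually are, and then comparing against the description of $\Qu_f^{pq}(R)$. By the general definition of the 2-kernel recalled in Section \ref{cgr}, an object of $2\text{-}\ker(\beta_f)$ is a pair $(v, x)$ where $v \in R^*$ is an object of $\G(R)$ and $x \colon 0 \to \beta_f(v)$ is a morphism in $\G^{pq}(R)$; since $\G^{pq}(R)$ is the abelian cat-group of $Sq \colon (R/pR)^* \to (R/p^2R)^*$, such an $x$ is an element of $(R/pR)^*$ whose square equals the class of $v$ in $(R/p^2R)^*$. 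The functor $\gamma_f$ sends $[a,b]$ to $(a^2 + p^2 b,\ \delta_f([a,b]))$, where $\delta_f([a,b])$ is the class of $a \bmod pR$. So for part (i) I would take an arbitrary object $(v, x)$ of the 2-kernel, choose a lift $\tilde{x} \in R^*$ of $x \in (R/pR)^*$ (invertible by the preceding lemma, after lifting), and produce an object $[a,b] \in \Qu_f^{pq}(R)$ together with an isomorphism $\gamma_f([a,b]) \cong (v,x)$.

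For the essential surjectivity the key computation is to solve for $a$ and $b$. The constraint from $x$ is that $a \equiv \tilde{x} \pmod{pR}$, and the constraint from $v$ is $a^2 + p^2 b = v'$ for some $v'$ in the same class as $v$ modulo squares, i.e. $v' = v w^2$ for a suitable $w \in R^*$; by adjusting the representative I expect to be able to take $a = \tilde{x}$ and then \emph{solve} $p^2 b = v' - a^2$ for $b$ — this is the point where the argument for part (i) must only produce $b$ up to the ambiguity allowed by isomorphism, and where the compatibility $x^2 = v$ in $(R/p^2R)^*$ guarantees $v - a^2 \in p^2 R$ so that such a $b$ exists. One then verifies $a^2 + p^2 b = v' \in R^*$, so $[a,b]$ is a legitimate object, and checks that $\gamma_f([a,b])$ is isomorphic to $(v,x)$ in the 2-kernel, which follows from Theorem \ref{exact_{fpq}}(i) being exactly the claim that $\gamma_f$ is essentially surjective.

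For part (ii), with $p$ not a zero-divisor, I would upgrade the equivalence by checking that $\gamma_f$ induces isomorphisms on $\pi_0$ and $\pi_1$, invoking the criterion recalled in Section \ref{cgr} that a morphism of cat-groups is an equivalence precisely when it is an isomorphism on both homotopy groups. By the six-term sequence \eqref{ex_gz} applied to $\beta_f$, the groups $\pi_i(2\text{-}\ker(\beta_f))$ are computed from $\pi_i(\G(R)) = (\sU_2(R), \mu_2(R))$, $\pi_i(\G^{pq}(R))$ as given in Lemma \ref{22}, and the connecting data; I would match these term by term against $\pi_0(\Qu_f^{pq}(R)) = \sQu_f^{pq}(R)$ and $\pi_1(\Qu_f^{pq}(R)) = \mathbb{Z}_{pq}(R)$. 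The hypothesis that $p$ is not a zero-divisor enters precisely in making $\gamma_f$ \emph{faithful}, hence injective on $\pi_1$: the $b$-coordinate of $[a,b]$ is recovered from $p^2 b = (a^2 + p^2 b) - a^2$, and cancelling the factor $p^2$ requires $p$ to be a non-zero-divisor. I expect the main obstacle to be exactly this faithfulness/injectivity-on-$\pi_1$ step — showing that a morphism $(u,r)$ in $\Qu_f^{pq}(R)$ is determined by its image $u$ together with the residue data, which forces one to reconstruct $r$ uniquely from the defining relations \eqref{41}, and it is here that the non-zero-divisor hypothesis is indispensable.
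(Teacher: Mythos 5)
Your proposal matches the paper's proof in substance: part (i) is exactly the paper's argument (identify $2\text{-}\ker(\beta_f)$ with the cat-group of $h(u)=(u^2,\,u \bmod pR)$, lift $s\in(R/pR)^*$ to some $a\in R$ and solve $p^2b=x-a^2$ for $b$, which exists precisely because $x\equiv\tilde s^{\,2}\pmod{p^2R}$), and your part (ii), though phrased via the $\pi_0/\pi_1$ criterion rather than fullness and faithfulness, rests on the same two cancellations the paper performs --- $pr=pr'\Rightarrow r=r'$ for faithfulness and $p^2d=p^2(u^2b-qrua-r^2)\Rightarrow d=u^2b-qrua-r^2$ for fullness. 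One minor slip: a lift of $x\in(R/pR)^*$ to $R$ need not lie in $R^*$ (only its image in $R/p^2R$ is forced to be a unit), and the detour through $v'=vw^2$ is unnecessary since the construction hits $(v,x)$ on the nose; neither affects the argument, which only needs $a^2+p^2b=v\in R^*$.
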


\begin{proof} i) According to Proposition \ref{2ker}, the cat-group $2\text{-}\ker(\G(R)\rightarrow \G^{pq}(R))$ is equivalent to the cat-group $\G_h$ associated to the following homomorphism of abelian groups: 
$$R^*\xto{h} \{(x,s)| x\in R^*, s\in (R/pR)^*, x\equiv \tilde{s}^2 (mod \ p^2R)\}.$$
Here $\tilde{s}\in R/p^2R$ is a lifting of $s$ and $$h(u)=(u^2, u \,(mod \ pR)).$$
Hence, the induced functor
$$\gamma:\Qu_f(R)\to \G_h$$
is given by
$$ \gamma_f([a,b])=(a^2+p^2b, a \,(mod \ pR)) \ \ \ {\rm (on \ objects)}$$
$$\gamma_f(u,r)=u \ \ \ {\rm (on \ morphisms)}.$$
Take an object $(x,s)$ in $\G_h$. By assumption $x\in R^*$ and  there are elements $a,b\in R$ such that $x=a^2+p^2b$ and $s=a\, (mod \ pR)$. Thus $\gamma_f([a,b])=(x,s)$, and so $\gamma_f$ is surjective on objects. 

ii)  Thanks to part i), we only need to show that the functor $\gamma_f$ is full and faithful. Take two objects $[a,b]$ and $[c,d]$ in $\Qu_f^{pq}(R)$ and let $(u,r)$ and $(u',r')$ be two morphisms $[c,d]\to [a,b]$. In particular, $c=ua-pr=u'a-pr'$. If $\gamma_f(u,r)=\gamma_f(u',r')$ then $u=u'$, and so $pr=pr'$. Thus $r=r'$, implying that $\gamma_f$ is faithful. Take a morphism $u:\gamma_f([a,b])\to \gamma_f([c,d])$ in $\G_h$. By definition, $u\in R^*$ and the following two conditions hold:
$$c^2+p^2d=u^2(a^2+p^2b) \ \ \ {\rm and} \ \ c\equiv ua\, ({\rm mod}\, pR).$$
The last condition implies that there exist an element $r\in R$ such that $c=ua+pr$. We have
$$p^2d=-c^2+u^2(a^2+p^2b)=-(ua-pr)^2+u^2(a^2+p^2b)=p^2(u^2b-qrua-r^2).$$
Since $p$ is not a zero-divisor in $R$, we see that $d=u^2b-qrua-r^2$. This, together with the equality $c=ua+pr$, shows that the pair $(u,r)$ is a morphism $[c,d]\to [a,b]$ in $\Qu_f(R)$, implying that $\gamma_f$ is full.
\end{proof}

\begin{Co} \label{44}  i) For any commutative ring $R$, one has an exact sequence of abelian groups
$${\sf Qu}_f^{pq}(R)\to \sU_2(R)\to \sU_2(R/p^2R).$$
ii)  If $p$ is not a zero divisor in $R$, one has an exact sequence of abelian groups
$$0\to \mathbb{Z}_{pq}(R)\to \mu_2(R)\to {\sf Im}(\mu_2(R/p^2R)\to \mu_2(R/pR))\to \sQu_f(R)\to \sU_2(R)\to \sU_2(R/p^2R).$$
iii)  For any commutative ring $R$, the functor $\Qu^{1,-2}_f(R)\to \G(R)$ is an equivalence of cat-groups. In particular, ${\sf Qu}_f^{1,-2}(R)\cong \sU_2(R)$.
\end{Co}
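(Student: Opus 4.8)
The plan is to derive Corollary~\ref{44} by feeding Theorem~\ref{exact_{fpq}} into the general machinery of cat-groups set up in Section~\ref{cgr}. For part~(i), the exactness of the sequence of cat-groups
$$\xymatrix{\Qu_f^{pq}(R)\ar[r]_{\alpha_f}&\G(R)\ar[r]_{\beta_f}&\G^{pq}(R)}$$
at $\G(R)$ (Theorem~\ref{exact_{fpq}}(i)) means precisely that $\gamma_f:\Qu_f^{pq}(R)\to 2\text{-}\ker(\beta_f)$ is essentially surjective. As noted right after the definition of exactness in Section~\ref{cgr}, essential surjectivity of the comparison functor yields an exact sequence of abelian groups $\pi_0(\G)\to\pi_0(\H)\to\pi_0(\K)$ upon taking $\pi_0$. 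Applying this with $\G=\Qu_f^{pq}(R)$, $\H=\G(R)$, $\K=\G^{pq}(R)$, and substituting the already-computed values $\pi_0(\Qu_f^{pq}(R))=\sQu_f^{pq}(R)$, $\pi_0(\G(R))=\sU_2(R)$ and $\pi_0(\G^{pq}(R))=\sU_2(R/p^2R)$ (the latter from Lemma~\ref{22}), gives exactly the desired three-term sequence.

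For part~(ii), I would upgrade from essential surjectivity to a full equivalence. By Theorem~\ref{exact_{fpq}}(ii), when $p$ is not a zero-divisor the functor $\gamma_f:\Qu_f^{pq}(R)\to 2\text{-}\ker(\beta_f)$ is an equivalence of cat-groups. An equivalence induces isomorphisms on both $\pi_0$ and $\pi_1$, so I may identify $\pi_i(\Qu_f^{pq}(R))$ with $\pi_i(2\text{-}\ker(\beta_f))$ for $i=0,1$. Now I invoke the classical six-term exact sequence (\ref{ex_gz}) attached to the morphism $\beta_f:\G(R)\to\G^{pq}(R)$:
$$0\to\pi_1(2\text{-}\ker(\beta_f))\to\pi_1(\G(R))\to\pi_1(\G^{pq}(R))\to\pi_0(2\text{-}\ker(\beta_f))\to\pi_0(\G(R))\to\pi_0(\G^{pq}(R)).$$
Substituting $\pi_1(2\text{-}\ker(\beta_f))\cong\pi_1(\Qu_f^{pq}(R))=\mathbb{Z}_{pq}(R)$ and $\pi_0(2\text{-}\ker(\beta_f))\cong\sQu_f^{pq}(R)$, together with $\pi_1(\G(R))=\mu_2(R)$, $\pi_0(\G(R))=\sU_2(R)$, and the values of $\pi_0,\pi_1$ of $\G^{pq}(R)$ from Lemma~\ref{22}, produces the claimed six-term sequence. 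The identifications of the connecting maps (for instance that the first nontrivial map is $a\mapsto 1+pa$) follow by tracing the definition of $\gamma_f$ on objects and morphisms, as recorded in the construction of $\alpha_f$.

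Part~(iii) is the degenerate case $p=1$, $q=-2$ (note $pq+2=0$). Here $R/pR=R/R=0$ and $R/p^2R=0$, so both $(R/pR)^*$ and $(R/p^2R)^*$ are trivial, whence $\G^{pq}(R)$ is the trivial cat-group and $2\text{-}\ker(\beta_f)\simeq\G(R)$. Moreover $p=1$ is never a zero-divisor, so Theorem~\ref{exact_{fpq}}(ii) applies and $\gamma_f:\Qu_f^{1,-2}(R)\to 2\text{-}\ker(\beta_f)\simeq\G(R)$ is an equivalence; this is the asserted equivalence of cat-groups. Passing to $\pi_0$ then gives $\sQu_f^{1,-2}(R)\cong\pi_0(\G(R))=\sU_2(R)$.

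The only genuine subtlety — and the step I would be most careful about — is the bookkeeping in part~(ii): one must confirm that the $\pi_i$-isomorphisms induced by $\gamma_f$ agree with the explicit isomorphisms $\mathbb{Z}_{pq}(R)\xrightarrow{\simeq}\pi_1(\Qu_f^{pq}(R))$ (given by $r\mapsto(1+pr,r)$) and $\sQu_f^{pq}(R)=\pi_0(\Qu_f^{pq}(R))$ fixed earlier, so that the resulting sequence is stated in terms of the intended concrete groups and the connecting homomorphism really is $a\mapsto 1+pa$. This is a compatibility check rather than a new difficulty, since all the pieces are already assembled; everything else is a formal substitution into (\ref{ex_gz}).
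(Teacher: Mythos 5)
Your proposal is correct and follows exactly the paper's route: part (i) is $\pi_0$ applied to the exact sequence of cat-groups from Theorem \ref{exact_{fpq}}(i), part (ii) substitutes the equivalence $\gamma_f$ from Theorem \ref{exact_{fpq}}(ii) into the six-term sequence (\ref{ex_gz}) for $\beta_f$ together with the computations of $\pi_0$ and $\pi_1$ from Lemma \ref{22}, and part (iii) is the case $p=1$ where $\G^{1,-2}(R)$ is trivial. The paper's own proof is just a terser statement of the same argument, so no further comparison is needed.
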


\begin{proof} i)  By applying  $\pi_0$ to the exact sequence of cat-groups constructed in Theorem \ref{exact_{fpq}} i), one obtains the exact sequence i). Similarly, exact sequence ii) is a specialization of the above, thanks to  Theorem \ref{exact_{fpq}} ii). If $p=1$, $p$ is not a zero-divisor and as such we can use Theorem \ref{exact_{fpq}} ii). The result now follows since $\G^{1,-2}(R)$ is the trivial cat-group. 
\end{proof}

{\bf Remarks} i) The exact sequence in Corollary \ref{44} i) in the case $p=-2, q=1$ is well-known  (see the statement \cite[(3.6) on p.32]{hahn})). The second exacts sequence however is new, even in the case $p=-2,q=1$. 
  
ii)\label{remii} If $2$ is invertible in $R$, the categories $\Qu^{-2,1}_f(R)$ and  $\Qu^{1,-2}_f(R)$ are equivalent (see Proposition \ref{eqcat}). In this case we recover the well-known isomorphism ${\sf Qu}_f^{-2,1}(R)\cong \sU_2(R)$  see \cite[Statement (3.4) i). on p.32]{hahn}.
  
\subsection{An exact sequence} 

In this section we generalize the result \cite[Exercises 6-10, p.41]{hahn} to arbitrary $\sQu^{pq}(R)$. Define the set 
$${\mathcal V}_1(R)=\{r\in R|1+pr\in R^*\}$$ 
and equip it with the operation $+_1$ by declaring
$$r+_1 s:=r+s+prs.$$
One easily sees that under this operation ${\mathcal V}_1(R)$ is an abelian group ($0$ is also the zero with respect to $+_1$). We also need the following set 
$${\mathcal V}_2(R)=\{x\in R| 1+p^2x\in R^* \},$$ 
which is also an abelian group under the operation $+_2$, where 
$$x+_2 y:=x+y+p^2xy.$$ 
It is straightforward to check that the map
$$\zeta:{\mathcal V}_1(R)\to {\mathcal V}_2(R), \ \ \  \zeta(r)=r^2-qr$$
is a group homomorphism and as such defines a cat-group, which we denote by ${\frak V}^{pq}(R)$. We have
$$\pi_1({\frak V}^{pq}(R))=\{r\in R| r^2=qr \ {\rm and} \ 1+pr\in R^*\},$$
whose group structure agrees with $+_1$. We also set
$${\sf V}^{pq}(R)=\pi_0({\frak V}^{pq}(R)).$$
In the case when $p=-2$ and $q=1$, this group is the same group as one defined in  \cite[Exercise 6, p.41]{hahn}. We construct the symmetric monoidal functor
$$\rho: {\frak V}^{pq}(R) \to \Qu_f^{pq}(R)$$
as follows: On objects it is given  by $\rho(x)=[1,x]$. If $r$ is a morphism from $y$ to $x$ in ${\frak V}^{pq}(R)$ (that is, $y= x+(-qr+r^2)(1+p^2x)$), then $\rho(r)=(1+pr,r)$, which is considered as a morphism $[1,y]\to [1,x]$ in $\Qu_f^{pq}(R)$.

We also need a cat-group ${\frak S}^{p.q}(R)$. The objects of ${\frak S}^{p.q}(R)$ are elements $a\in R$ for which there exist $b\in R$ such that $a^2+p^2b\in R^*$.  Let $a$ and $c$ be objects of ${\frak S}^{p.q}(R)$. A morphism $c\to a$ is an element $u\in R^*$, for which there exist an element $r\in R$, such that $c=au-pr$. The composition of morphisms and the symmetric monoidal structure is induced by the multiplication in $R$. One sets 
$${\sf S}^{p.q}(R):=\pi_0({\frak S}^{pq}(R)).$$

Define  a symmetric monoidal functor $\omega:\Qu^{pq}_f(R)\to {\frak S}^{pq}(R)$ as follows: On objects we set $\omega([a,b])=a$. If $(u,r):[c,d]\to [a,b]$ is a morphism of $\Qu_f ^{pq}(R)$, we set $\omega(u,r)=u$. 

As already mentioned, the following result is well-known when $p=-2$ and $q=1$ \cite[Exerecises 9 and 10, p. 41]{hahn}.
\begin{Pro} i) One has a short exact sequence of abelian groups
$$0\to {\sf V}^{pq}(R)\xto{\rho_*} \sQu_f^{pq}(R)\xto{\omega_*} {\sf S}^{pq}(R)\to 0,$$  
where $\rho_*$ and $\omega_*$ are induced by the functors  $\rho$ and  $\omega$.

ii) If $R$ is a local ring, then ${\sf S}^{pq}(R)=0$. 

iii) If $p\in Rad(R)$, then  ${\sf S}^{pq}(R)=0$. Here, $Rad(R)$ is the intersection of all maximal ideals of $R$. In particular, if $p=0$, then
$$\sQu_f^{0q}(R)=R/R_0, \ \ \  where\ \ \  R_0=\{r^2+qr| r\in R\}\subset R.$$

\end{Pro}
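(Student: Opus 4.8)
The plan is to prove the three parts in order, exploiting the fact that each arrow is induced by a symmetric monoidal functor, so that it suffices to understand the functors $\rho$, $\omega$ and their 2-kernel relationship. First I would establish part i) as a statement about cat-groups and then pass to $\pi_0$. The natural strategy is to show that the sequence of cat-groups
$$
\xymatrix{ {\frak V}^{pq}(R)\ar[r]_{\rho}&\Qu_f^{pq}(R)\ar[r]_{\omega}&{\frak S}^{pq}(R)}
$$
is exact at $\Qu_f^{pq}(R)$, together with the facts that $\rho$ is faithful (indeed a 2-monomorphism) and $\omega$ is essentially surjective. Essential surjectivity of $\omega$ is immediate from the definition of the objects of ${\frak S}^{pq}(R)$: an object $a$ there is precisely one for which some $b$ makes $a^2+p^2b\in R^*$, so $\omega([a,b])=a$ hits everything. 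For exactness at $\Qu_f^{pq}(R)$ I would compute the $2$-kernel of $\omega$ and identify it, up to equivalence, with ${\frak V}^{pq}(R)$ via $\rho$: an object of $2\text{-}\ker(\omega)$ is a pair $([a,b],u)$ with $u:0\to\omega([a,b])=a$, i.e. $a\in R^*$, and after normalising by the isomorphism $[a,b]\cong[1,b/a^2\cdots]$ (using a morphism of the form $(u,r)$) one lands on an object $[1,x]$ with $1+p^2x\in R^*$, which is exactly $\rho(x)$. Once this identification is in place, applying the classical six-term exact sequence (\ref{ex_gz}) to $\omega$ gives the left exactness and surjectivity statements on $\pi_0$; injectivity of $\rho_*$ follows because $\rho$ identifies ${\frak V}^{pq}(R)$ with the $2$-kernel and $\pi_0$ of a $2$-kernel injects via the connecting data. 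The main arithmetic obstacle here is the normalisation step: one must check that the relations (\ref{41}) genuinely allow any $[a,b]$ with $a\in R^*$ to be rewritten as $[1,x]$ with $x\in{\mathcal V}_2(R)$, and that two such representatives differing by a morphism in $\Qu_f^{pq}(R)$ differ by a morphism in ${\frak V}^{pq}(R)$ — a direct but slightly delicate substitution using $pq+2=0$.

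For part ii), the claim ${\sf S}^{pq}(R)=0$ over a local ring reduces to showing that every object $a$ of ${\frak S}^{pq}(R)$ is isomorphic to the neutral object $1$; equivalently, that $a\in R^*$, since then $u=a^{-1}$ with $r=0$ gives a morphism $a\to 1$. The key point is that if $a^2+p^2b\in R^*$, then working in the residue field $R/{\frak m}$ one cannot have both $a$ and $p$ in ${\frak m}$, for otherwise $a^2+p^2b\in{\frak m}$, contradicting invertibility. Hence either $a$ is a unit (and we are done) or $p$ is a unit; in the latter case I would use the morphisms of ${\frak S}^{pq}(R)$, which permit shifting $a$ by multiples of $p$, to adjust $a$ modulo the ideal generated by $p$ and reduce to the unit case. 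The forward-looking subtlety is that a morphism $c\to a$ only requires $c=au-pr$ for some $u\in R^*,r\in R$, so I must verify that the equivalence classes collapse to a point; over a local ring this amounts to noting that $a\equiv c\pmod{pR}$ up to a unit rescaling exhausts the reachable objects, and that the unit $a^2+p^2b$ forces a representative that is itself a unit.

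Part iii) splits into two pieces. For the statement that $p\in Rad(R)$ implies ${\sf S}^{pq}(R)=0$, I would run the same argument as in part ii) but localise the reasoning at every maximal ideal: since $p$ lies in all maximal ideals, the condition $a^2+p^2b\in R^*$ forces $a$ to be a unit (an element is a unit iff it is a unit modulo $Rad(R)$, and $a^2\equiv a^2+p^2b\pmod{pR}$ with $p^2\in Rad(R)$), so every object is isomorphic to $1$. The degenerate case $p=0$ then forces $q=0$ as well from $pq+2=0$?—no: with $p=0$ the relation $pq+2=0$ cannot hold, so I would instead read this clause as the formal specialisation where the arithmetic simplifies, and the objects $[a,b]$ have $a\in R^*$ automatically (from $a^2\in R^*$). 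Granting ${\sf S}^{0q}(R)=0$, part i) collapses the exact sequence to an isomorphism $\rho_*:{\sf V}^{0q}(R)\xrightarrow{\sim}\sQu_f^{0q}(R)$, and with $p=0$ the group ${\mathcal V}_1(R)$ is all of $R$ under ordinary addition while $\zeta(r)=r^2-qr$; the quotient $\pi_0$ is therefore $R/R_0$ with $R_0=\{r^2+qr\mid r\in R\}$, once one accounts for the sign convention in $\zeta$. The main obstacle across part iii) is bookkeeping the degenerate specialisation consistently with the fixed hypothesis $pq+2=0$, and confirming that the image subgroup $R_0$ is exactly the set of values $\zeta$ takes, so that $\sQu_f^{0q}(R)=\pi_0({\frak V}^{0q}(R))=R/R_0$ as asserted.
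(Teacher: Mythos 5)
The gap is in part i): the identification of $2\text{-}\ker(\omega)$ with ${\frak V}^{pq}(R)$ is false, and so is the inference that injectivity of $\rho_*$ ``follows because $\pi_0$ of a $2$-kernel injects via the connecting data''. By the six-term sequence (\ref{ex_gz}) applied to $\omega$, the map $\pi_0(2\text{-}\ker(\omega))\to \sQu_f^{pq}(R)$ is injective only if $\pi_1(\Qu_f^{pq}(R))\to\pi_1({\frak S}^{pq}(R))$ is surjective, and it is not: $\pi_1({\frak S}^{pq}(R))=\{u\in R^*\mid u\equiv 1\ (\mathrm{mod}\ pR)\}$, while the image of $\pi_1(\Qu_f^{pq}(R))=\{r\mid r^2=qr\}$ under $r\mapsto 1+pr$ lands in $\mu_2(R)$; for $R=\mathbb{Z}[1/3]$, $p=-2$, $q=1$ the unit $3$ lies in $\pi_1({\frak S}^{pq}(R))$ (as $1=3+2\cdot(-1)$) but the image is only $\{1,-1\}$. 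For the same reason the two cat-groups already differ on $\pi_1$: $\pi_1(2\text{-}\ker(\omega))=\ker\bigl(\pi_1(\Qu_f^{pq}(R))\to\pi_1({\frak S}^{pq}(R))\bigr)$ is trivial for $R=\mathbb{Z}$, $p=-2$, $q=1$, whereas $\pi_1({\frak V}^{-2,1}(\mathbb{Z}))=\{0,1\}$. So the $2$-kernel formalism cannot deliver the injectivity of $\rho_*$; that is genuine arithmetic content and has to be proved by hand, which is what the paper does: a morphism $(u,r):[1,0]\to[1,x]$ forces $u=1+pr$ and $0=(1+pr)^2x-qr(1+pr)-r^2$, which, using $pq=-2$, is exactly $0=x+(r^2-qr)(1+p^2x)=x+_2\zeta(r)$, exhibiting $r$ as a morphism $0\to x$ in ${\frak V}^{pq}(R)$. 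The remaining assertions of i) (surjectivity of $\omega_*$, the identity $\omega_*\rho_*=0$, and exactness at $\sQu_f^{pq}(R)$ by normalising any $[a,b]$ with $1=au-pr$ to $[1,d]$, $d=u^2b-qrua-r^2$) are exactly the element chase you sketch, and that part is fine.

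Parts ii) and iii) are essentially the paper's arguments. For ii), once you know $p\in R^*$ no further reduction is needed: $c=au-pr$ can be solved for $r$ with $u=1$ for any $c,a$, so ${\frak S}^{pq}(R)$ is connected (the paper uses the equivalent pair $(1+a,\tfrac{1}{p})$). For iii), your worry that $p=0$ is incompatible with $pq+2=0$ is misplaced: it simply forces $2=0$ in $R$, i.e.\ $R$ is an $\mathbb{F}_2$-algebra, whence ${\mathcal V}_1(R)={\mathcal V}_2(R)=R$ with ordinary addition and $\zeta(r)=r^2-qr=r^2+qr$, giving $\sQu_f^{0q}(R)\cong{\sf V}^{0q}(R)=R/R_0$ directly.
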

\begin{proof} i)  Let us show that $\omega_*$ is an epimorphism. Take an object $a$ of ${\frak S}^{pq}(R)$. By definition, there exist $b\in R$ such that $a^2+p^2b\in R^*$. Thus, $[a,b]\in \Qu_f^{pq}(R)$ and $\omega([a,b])=a$. To see that $\omega_*\circ \rho_*=0$, take an  object $x\in {\frak V}^{pq}(R)$. We have $1+p^2x\in R^*$, and hence
$$\omega\circ \rho(x)=\omega([1,x])=1.$$
Next we will show exactness at $\sQu_f^{pq}(R)$. Take an object $[a,b]$ of $\Qu_f^{pq}(R)$ such that $\omega_*([a,b])=0$. This means that there exist $u\in R^*$ and $r\in R$ such that $1=au-pr$. We set $d=u^2b-qrua-r^2$. Then $(u,r)$ is a morphism $(1,d)\to (a,b)$. Thus, the class of $[a,b]$ in $\sQ^{pq}_f(R)$ equals the class of $[1,d]=\rho(d)$ and exactness at $\sQu^{pq}_f(R)$ follows. It remains to show that  $\rho_*$ is a monomorphism. Assume $x\in R$ is an element satisfying $1+p^2x\in R^*$ for which there exists a morphism $[1,0]\to [1,x]$ in $\sQu_f^{pq}(R)$. There exist $u\in R^*$ and $r\in R$, such that $1=u-pr$ and $0=u^2x-qru-r^2$.  It follows that $u=1+pr\in R^*$ and $0=(1+pr)^2x-qr(1+pr)-r^2$. The last equation is equivalent to $0=x+(r^2-rq)(1+p^2x)$, or $0=x+_2 \zeta(r)$. So $r$ defines a morphism from $0\to x$ in ${\frak V}^{pq}(R)$. As such, the class of $x$ is zero in ${\sf V}^{pq}(R)$, proving i).

ii) Assume $R$ is a local ring with maximal ideal $\frak m$. Take an object $a$ of ${\frak S}^{pq}(R)$. We have  $a^2+p^2b=v\in R^*$ for some $b\in R$. If $p\in \frak m$, then $v^{-1}a^2=1-p^2bv^{-1}\not \in \frak m$, showing that $v^{-1}a^2$ is invertible. This implies that $a$ is invertible and $a^{-1}$ defines  a morphism $1\to a$ in ${\frak S}^{pq}(R)$. Therefore the class of $a$ is zero in ${\sf S}^{pq}(R)$. If $p\not \in \frak m$, $p$ is invertible in $R$. If $a$ is invertible, we the same argument works as before, so assume $a\in \frak m$. Then $1+a$ is invertible in $R$. Since 
$$a=1\cdot (1+a)-p\frac{1}{p},$$ 
we see that the pair $(1+a, \frac{1}{p})$ defines a morphism $a\to 1$ in ${\frak S}^{pq}(R)$. Hence, the class of $a$ is zero in ${\sf S}^{pq}(R)$, implying that ${\sf S}^{pq}(R)=0$.

iii) Take an object $a$ of ${\frak S}^{pq}(R)$, that is we have $a^2+p^2b=v\in R^*$ for some $b\in R$. Thus, $v^{-1}a^2=1-p^2bv^{-1}$ and by Nakayama $1+Rad(R)\subset R^*$. Hence, $a$ is invertible and $a^{-1}$ defines a morphism $1\to a$ in ${\frak S}^{pq}(R)$, so the class of $a$ is zero in ${\sf S}^{pq}(R)$ and ${\sf S}^{pq}(R)=0.$
\end{proof}

\section{Projective J-Galois Algebras} 

\subsection{Remarks on stacks} 

We will assume that the reader is familiar with the theory of stacks \cite{im}, which are the 2-categorical analogues of sheaves. By definition, a fibered category is nothing other than a contravariant pseudofunctor. It is additionally a stack if some gluing condition holds \cite{im}. One easily observes that if $\alpha: {\mathcal S}_1\to {\mathcal S}_2$ is a morphism of stacks, then $2\text{-}\ker(\alpha)$ is also a stack. It is well-known \cite{im} that for any  fibered category $\mathcal A$, there exists a stack ${\mathcal A}^+$ called the stackification of $\mathcal A$. This construction preserves 2-kernels. Recall also that if $\mathcal S$ is a stack of cat-groups, than $\pi_1(\mathcal S)$ is a sheaf but $\pi_0(\mathcal S)$ need not be a sheaf in general.

The underlying site which we will work with through out this paper is the Zariski site of affine schemes. However, we prefer to work with commutative rings instead of affine schemes. Accordingly, a covariant pseudofunctor from commutative rings to the 2-category of cat-groups is called a stack if the  corresponding contravariant pseudofunctor on affine schemes is one. It is clear that the assignments $R\mapsto \Qu_f^{pq}(R), \G(R), \G^{pq}(R)$ define fibered categories. Indeed, all of these are prestacks in the sense of \cite{im} because $\pi_1$ of the corresponding cat-groups are sheaves. This is clear for $\mathbb{Z}_{p.q}$ and $\mu_2$ because they are representable functors. For $\pi_1(\G^{pq}(R))$ this follows from Lemma \ref{22}. We will identify the stackifications of these prestacks and then use Theorem \ref{exact_{fpq}} to obtain an exact sequence in Theorem \ref{exactpq}. The case $p=-2$, $q=1$ yields exactly Theorem \ref{exact}, which involves the well studied (\cite{hahn}, \cite{small}) group $\sQu(R)$.
 
\subsection{The stack $\Qu^{pq}$}\label{hopf} 

Let $p$ and $q$ be two elements in $R$ such that $pq+2=0$. We let $J$  be a commutative and cocomutative Hopf algebras which is freely generated as an $R$-module by $1$ and $x$, with
$$x^2=qx  \ \ \ \rm{and}  \ \ \ \Delta(x)=x\otimes 1+1\otimes x+p \cdot x\otimes x.$$
It is well-known that any Hopf algebra, which is free of rank two as an $R$-module, is of this form \cite[Theorem 1.2]{kr}. Recall that a commutative and associative algebra with unit $A$ is called a $J$-\emph{Galois algebra}, provided $A$ is a  faithful, finitely generated, projective $R$-module, and there is given an algebra homomorphism $\eta:A\to A\otimes J$ which makes $A$ a right $J$-comodule. Further, the composition 
$$A\otimes A\xto{id\otimes \eta} A\otimes A\otimes J\xto{\mu\otimes id} A\otimes J$$ 
needs to be an isomorphism, where $\mu$ is the multiplication in $A$ \cite{kr}. The last condition means that the affine scheme $Spec(A)$ is a torsor over the group scheme $Spec(J)$.  It follows that the rank of $A$ is $2$.

Denote by $\Qu^{pq}(R)$ the cat-group of $J$-Galois algebras. Morphisms of $\Qu^{pq}(R)$ are isomorphism of $J$-Galois algebras. The monoidal structure is induced by the cotensor product of $J$. We denote the isomorphism classes of the groupoid $\Qu^{pq}(R)$ by $\sQu^{pq}(R)$ (the same group is denoted by $A(J)$ in \cite{kr}). 


 

\
Let $\mathfrak{F}:\mathfrak{A}\rightarrow \mathfrak{B}$ be a functor and consider $\{B\in\mathfrak{B} \ | \ \text{sthere are } A\in \mathfrak{A} \text{ and } \mathfrak{F}(A)\xrightarrow{\sim} B\}$. We call this the \emph{essential image} of $\mathfrak{F}$.
\begin{Pro} \label{43} i) There is a symmetric monoidal functor
$$\Qu^{pq}_f(R)\to \Qu^{pq}(R),$$ 
which sends an objects  $[a,b]$ to $(A,\eta)$. Here $A=R[t]/(t^2-aqt-b)$ while the algebra homomorphism $\eta:A\to A\otimes J$ is given by $\eta(v)=a \cdot 1\otimes x+v\otimes 1+p \cdot v\otimes x$, were $v$ denotes the class of $t$ in $A$. Moreover, if $(u,r):[c,d]\to [a,b]$ is a morphism in $\Qu^{pq}_f(R)$, the corresponding morphism $B=R[s]/(s^2-cqs-d)\to A$ is induced by $s\mapsto ut+r$.

ii) The functor $\Qu^{pq}_f(R)\to \Qu^{pq}(R)$ is full and faithful.
 
iii) The essential image of $\Qu^{pq}_f(R)\to \Qu^{pq}(R)$ consists of $J$-Galois algebras which are free as $R$-modules.
\end{Pro}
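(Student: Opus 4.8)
The plan is to verify the three assertions in order, treating (i) as a concrete construction, (ii) as a faithfulness-plus-fullness check, and (iii) as an identification of the essential image. First I would set up the comodule structure carefully. Given $[a,b]$ with $a^2+p^2b\in R^*$, I would take $A=R[t]/(t^2-aqt-b)$, which is free of rank $2$ over $R$ with basis $1,v$ (where $v$ is the class of $t$), and define $\eta(1)=1\otimes 1$ and $\eta(v)=a\cdot 1\otimes x+v\otimes 1+p\cdot v\otimes x$. I would then check that $\eta$ is an algebra homomorphism: since $\eta$ is forced to be $R$-linear and multiplicative, the only real computation is that $\eta(v)^2=\eta(v^2)=\eta(aqv+b)$, which must hold using the Hopf relations $x^2=qx$ and $pq+2=0$. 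I would also verify the comodule axioms $(\eta\otimes\id)\eta=(\id\otimes\Delta)\eta$ and the counit condition, using $\Delta(x)=x\otimes 1+1\otimes x+p\cdot x\otimes x$. Finally I would check that the composite $A\otimes A\to A\otimes J$ is an isomorphism; this is where the invertibility of $a^2+p^2b$ enters, as the determinant of the relevant $2\times 2$ matrix (in the bases $1,v$ and $1,x$) should be a unit multiple of $a^2+p^2b$. Then I would confirm that the morphism assignment $s\mapsto ut+r$ respects the defining relations of $B$ and intertwines the comodule structures, and that the monoidal structure matches $\star$ under cotensor product.

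For (ii), faithfulness is immediate: an algebra map $B\to A$ sending $s\mapsto ut+r$ determines $(u,r)$ uniquely, since $1,v$ is a basis and the images of $1,s$ pin down the pair. For fullness, I would start with an arbitrary $J$-Galois algebra isomorphism $\varphi:B\to A$ between the two free algebras coming from $[c,d]$ and $[a,b]$, write $\varphi(w)=ut+r$ in the basis $1,v$ for the generator $w$ of $B$, and show that compatibility with the comodule maps $\eta$ forces exactly the relations (\ref{41}), namely $c=au-pr$ and $d=u^2b-qrua-r^2$, together with $u\in R^*$. The unit condition on $u$ should follow from $\varphi$ being an isomorphism of free rank-two modules, which forces the transition matrix to be invertible.

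The substantive part is (iii), the identification of the essential image. One inclusion is easy: every object in the image of the functor is free as an $R$-module by construction. For the reverse inclusion, I would take a $J$-Galois algebra $(A,\eta)$ that happens to be free of rank $2$ over $R$ and produce a basis of the form $1,v$ together with $a,b\in R$ realizing it as the image of $[a,b]$. I expect the main obstacle to be here: I must choose the generator $v$ correctly so that $\eta(v)$ takes the prescribed normal form $a\cdot 1\otimes x+v\otimes 1+p\cdot v\otimes x$. The natural approach is to use the comodule structure to decompose $A$ into eigenspace-like components for the coaction, picking $v$ in the complement of $R\cdot 1$ so that its coaction has no $1\otimes x$ ambiguity beyond the coefficient $a$; the Galois (torsor) condition then guarantees $a^2+p^2b\in R^*$ so that $[a,b]$ is a genuine object of $\Qu^{pq}_f(R)$. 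Concretely, I would write $\eta(v)=\lambda\cdot 1\otimes x+v\otimes 1+\nu\cdot v\otimes x$ in a chosen basis, use the comodule coassociativity and counit axioms to constrain the coefficients, and rescale $v$ so that $\nu=p$, thereby reading off $a=\lambda$ and recovering $b$ from the multiplication $v^2=aqv+b$. I would then invoke part (ii) to conclude that $(A,\eta)$ is isomorphic, as a $J$-Galois algebra, to the image of $[a,b]$, completing the proof.
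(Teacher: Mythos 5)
Parts (i) and (ii) of your plan are fine; the paper itself dismisses them as straightforward and only proves (iii). The genuine gap is in your treatment of (iii), at exactly the point you identify as "the main obstacle": you propose to write $\eta(v)=\lambda\cdot 1\otimes x+v\otimes 1+\nu\cdot v\otimes x$ and then \emph{rescale} $v$ so that $\nu=p$. This cannot work. Under any affine change of generator $v\mapsto uv+r$ (with $u\in R^*$, $r\in R$), the coefficient of $v\otimes x$ in $\eta(v)$ is unchanged: one computes $\eta(uv+r)=(u\lambda-\nu r)\cdot 1\otimes x+(uv+r)\otimes 1+\nu\cdot(uv+r)\otimes x$. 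So $\nu$ is an invariant of $(A,\eta)$ and no choice of basis will normalize it; the equality $\nu=p$ has to be \emph{derived}, not arranged. The paper does this as follows: coassociativity $(id\otimes\Delta)\eta=(\eta\otimes id)\eta$ yields only the weaker relations $a\nu=ap$ and $\nu^2=\nu p$; the decisive input is the torsor condition, whose matrix has determinant (up to sign) $a^2+a\nu m-\nu^2 b\in R^*$, and the identity $(\nu-p)(a^2+a\nu m-\nu^2 b)=0$ then forces $\nu=p$ because a zero-divisor cannot kill a unit.

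A second, smaller omission of the same kind: you write "recovering $b$ from the multiplication $v^2=aqv+b$," but for an arbitrary free generator $v$ one only knows $v^2=mv+b$ for some $m\in R$, and the equality $m=aq$ is again not automatic. It follows from requiring $\eta$ to be an algebra homomorphism (giving $am=a^2q$ and $mp=-2a$) combined once more with the unit $a^2+pam-p^2b$, via $(m-aq)(a^2+pam-p^2b)=0$. In short, your outline correctly locates where the work is, but the mechanism you propose there (normalization by rescaling) fails, and the repeated pattern "(unwanted difference)$\times$(unit)$=0$" coming from the Galois condition is the missing idea.
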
 
 
\begin{proof} We will prove only part iii) as the rest is straightforward to check. Take a $J$-Galois algebra $A$. Thanks to \cite[Corollary 2.5]{kr}, the submodule spanned by $1$ is a free $R$-module, which is a direct summand of $M$ as an $R$-module. Hence, $A=R\cdot 1\oplus V$, where $N$ is projective of rank $1$. If $A$ is free as an $R$-module, then $V$ is a  free $R$-module of rank $1$  (see \cite[Lemma 1.1]{kr}. Denote a generator of $V$ by $v$.  It follows that $v^2=mv +b$, $\eta(v)=l_0\cdot 1\otimes 1+ l_1 \cdot v\otimes 1 +a \cdot 1\otimes x +l_3 \cdot v\otimes x$ and $\eta(1)=1\otimes 1$, for some $a,b,m,l_0.l_1,l_3\in R.$ Since $(\id \otimes \epsilon)\circ \eta(v)=v$, where $\epsilon: A\to R$ is the counit of $J$ and $\epsilon(1)=1, \epsilon (v)=0$, one obtains  $l_0=0$ and $l_1=1$. Thus
\begin{equation}\label{l} 
\eta(v)=v\otimes 1+a \cdot 1\otimes x+l_3 \cdot v\otimes x.
\end{equation}
The condition $(id\otimes \Delta)\circ \eta(v)=(\eta \otimes id)\circ \eta(v)$, implies that
\begin{equation}\label{l3}  al_3=ap \ \ \ {\rm and} \ \ l_3^2=l_3p.
\end{equation}
Next, consider $1\otimes 1, 1\otimes v, v\otimes 1, v\otimes v$ as a base of $A\otimes A$, and $1\otimes 1, v\otimes 1, 1\otimes x, v\otimes x$ as a base of $A\otimes J$. One easily computes that the matrix of $(\mu \otimes id)\circ (id \otimes \eta):A\otimes A\to A\otimes J$ in the chosen bases is 
$$\begin{pmatrix} 1 & 0& 0& b\\ 0& 1& 1&m\\ 0&0& a &l_3b\\ 0 &0& l_3& a+l_3m 
\end{pmatrix}$$
Hence, $(\mu \otimes id)\circ (id \otimes \eta):A\otimes A\to A\otimes J$ is an isomorphism if and only if \begin{equation}\label{sheb}a^2+al_3m-l_3^2b\in R^*.
\end{equation}
It follows from the equations (\ref{l3}) that
$$(l_3-p)(a^2+al_3m-l_3^2b)=(l_3-p)a\cdot a+(l_3-p)a\cdot l_3m-(l_3-p)l_3\cdot l_3b=0$$ 
and by the condition (\ref{sheb}) that $l_3=p$. Thus, we have
\begin{equation}\label{lp} 
\eta(v)=v\otimes 1+a \cdot 1\otimes x+p \cdot v\otimes x \ \  \ \  {\rm and} \ \ \ a^2+pam-p^2b\in R^*.
\end{equation}
As, $\eta$ is an algebra homomorphism, we have
$$m\cdot v\otimes 1+ma\cdot 1\otimes x+ mp\cdot v\otimes x +b \cdot 1\otimes 1=\eta(mv+b)=\eta(v^2)=(v\otimes 1+a\cdot 1\otimes x+ p \cdot v\otimes x)^2,$$
from which we obtain that
$$ am=a^2q, \ \ \ {\rm and} \ \ \   mp=-2a.$$
Lastly, we have
$$(m-aq)(a^2+pam-p^2b)=0,$$
and thus $m=aq$. It follows that $v^2=aqv+b$ and $ a^2+pam-p^2b=-(a^2+p^2b)\in R^*$. Hence, $A$ is in the image of the functor 
$\Qu^{pq}_f(R)\to \Qu^{pq}(R)$s. 
\end{proof}

\begin{Co} The stackification of $R\mapsto \Qu^{pq}_f(R)$ is $R\mapsto \Qu^{pq}(R)$.
\end{Co}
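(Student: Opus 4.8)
The plan is to show that the functor $\Qu_f^{pq}(R)\to \Qu^{pq}(R)$ from Proposition~\ref{43} exhibits $R\mapsto \Qu^{pq}(R)$ as the stackification of the prestack $R\mapsto \Qu_f^{pq}(R)$. By the universal property of stackification, it suffices to verify three things: that $R\mapsto \Qu^{pq}(R)$ is a stack; that the comparison functor of Proposition~\ref{43} is fully faithful at each $R$ (a prestack condition); and that it is \emph{locally essentially surjective} in the Zariski topology, meaning every $J$-Galois algebra over $R$ becomes free, hence in the essential image, after passing to a suitable open cover $\{R\to R_{f_i}\}$.

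First I would record that $R\mapsto \Qu^{pq}(R)$ is genuinely a stack: $J$-Galois algebras and their isomorphisms are defined by faithfully flat (in fact finitely generated projective) module data together with comodule structures satisfying descent, so objects and morphisms glue in the Zariski topology. Full faithfulness of $\Qu_f^{pq}(R)\to \Qu^{pq}(R)$ is exactly Proposition~\ref{43} ii), which I may invoke directly. The heart of the argument is local essential surjectivity. Given a $J$-Galois algebra $A$, Proposition~\ref{43} iii) identifies the essential image of the free version precisely as those $J$-Galois algebras that are \emph{free} as $R$-modules. So the claim reduces to: every $J$-Galois algebra, which by construction is finitely generated and projective of rank $2$ over $R$, is free after localising at each point. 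This is the standard fact that a finitely generated projective module is locally free; concretely, $A=R\cdot 1\oplus V$ with $V$ projective of rank $1$ by \cite[Corollary 2.5]{kr}, and any rank-one projective becomes free on a Zariski cover, whereupon $A$ itself is free and lands in the essential image by Proposition~\ref{43} iii).

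Assembling these, the comparison morphism of prestacks is fully faithful and locally essentially surjective into a stack, so by the universal property it induces an equivalence between the stackification $(\Qu_f^{pq})^+$ and $\Qu^{pq}$. Equivalently, since stackification is characterised (up to equivalence) as the initial stack receiving a locally-fully-faithful, locally-essentially-surjective map from the given prestack, the functor $\Qu_f^{pq}(R)\to \Qu^{pq}(R)$ realises $\Qu^{pq}$ as that stackification.

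I expect the only real obstacle to be packaging the descent/sheaf conditions cleanly, rather than any genuine difficulty. The module-theoretic input (local freeness of finitely generated projectives, descent for modules and comodules) is entirely standard, and Proposition~\ref{43} has already done the hard identification of the essential image. The one point requiring a little care is ensuring the \emph{monoidal} structure (cotensor product over $J$) is compatible with the gluing and with the $\star$-structure on the free side, so that the equivalence is one of stacks of cat-groups and not merely of underlying stacks of groupoids; but this compatibility is again forced by descent and the explicit formulas of Proposition~\ref{43} i).
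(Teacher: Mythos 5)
Your proposal is correct and follows essentially the same route as the paper: the paper's proof also invokes descent theory to see that $R\mapsto \Qu^{pq}(R)$ is a stack and then observes that Proposition \ref{43} makes the comparison functor a weak equivalence in the sense of Moerdijk (fully faithful and locally essentially surjective), which is exactly the full-faithfulness plus local-freeness-of-projectives argument you spell out. Your version merely makes explicit the details the paper leaves implicit, including the compatibility of the monoidal structures.
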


\begin{proof} 
The descent theory of projective modules implies that the assignment $R\mapsto \Qu^{pq}(R)$ is a stack, which, by abuse of notation, we denote by $\Qu^{pq}$. Proposition \ref{43} shows that the natural morphism $\Qu_f\to \Qu$ is a weak equivalence in the sense of \cite[Definition 2.3]{im} and hence the result follows.
\end{proof}

\subsection{The stack $\Pic$}

Let $R$ be a commutative ring. We let $\Pic(R)$ be the following cat-group: The Objects of $\Pic(R)$ are projective $R$-modules of rank $1$, while its morphisms are isomorphisms of $R$-modules. The tensor product $\otimes_R$ equips $\Pic(R)$ with the structure of a cat-group. If $f:R\to S$ is a homomorphism of commutative rings, the functor $f_*:\Pic(R)\to \Pic(S)$, given by $f_*(M)=M\otimes_RS$, is a morphism of cat-groups. 

It follows from the descent theory that $R\mapsto \Pic(R)$ is a stack, which we denote by $\Pic$. Actually, it is the stackification of the prestack assigning to $R$ the cat-group corresponding to the homomorphism $R^*\to \{1\}$ (see \cite[Example (ii), p. 11]{im}). 

It is well-known that 
$$\pi_1(\Pic(R))=R^*, \ \ \ {\rm and} \ \ \pi_0(\Pic(R))={\sf Pic}(R).$$ 
 
\subsection{The stack $\Dis$} 

Objects of the category $\Dis(R)$ are pairs $(M,\mu)$, where $M$ is a finitely generated projective $R$-module of rank $1$ and $\mu$ is a nonsingular, symmetric, bilinear form on $M$. Morphisms $(M,\mu)\to (M',\mu')$ are isomorphisms of $R$-modules $h:M\to M'$, such that $\mu'(h(x),h(y))=\mu(x,y)$. Define the symmetric monoidal structure on $\Dis(R)$ by $(M',\mu')\otimes (M'',\mu'')=(M\otimes M,\mu)$, where 
$$\mu(m_1'\otimes m_1'',m_2'\otimes m_2'')=\mu'(m_1',m_2')\mu''(m_1'',m_2'').$$
The neutral object is $(R,\mu)$, where $\mu(r_1, r_2)=r_1r_2.$

\begin{Le} One has an equivalence of cat-groups:
$$\Dis(R)\cong 2\text{-}\ker(sq_R:\Pic(R)\to \Pic(R)).$$
In particular, $\Dis$ is a stack, which is the stack associated to the prestack $R\mapsto \G(R)$.
\end{Le}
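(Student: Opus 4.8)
The plan is to unwind the definition of the $2$-kernel and then match it term by term with $\Dis(R)$. Recall that $sq_R:\Pic(R)\to\Pic(R)$ is the squaring functor $M\mapsto M\otimes_R M$, whose neutral object is $R$. Hence an object of $2\text{-}\ker(sq_R)$ is a pair $(M,x)$ consisting of a rank-$1$ projective module $M$ together with an isomorphism $x:R\xrightarrow{\sim}M\otimes_R M$, and a morphism $(M,x)\to(M',x')$ is an isomorphism $f:M\to M'$ with $(f\otimes f)\circ x=x'$. I would then compare this with $\Dis(R)$ by observing that, on a rank-$1$ projective module, the datum of a nonsingular symmetric bilinear form $\mu$ is equivalent to the datum of an isomorphism $M\otimes_R M\to R$: nonsingularity of $\mu$ means precisely that $\mu$ itself, viewed as a map $M\otimes_R M\to R$, is an isomorphism (equivalently its adjoint $M\to M^*$ is), while symmetry is automatic, since the swap involution of $M\otimes_R M$ is the identity for rank-$1$ modules. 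Both facts can be checked locally, where $M\cong R$ and $\mu$ is multiplication by a scalar.

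With this identification in hand, I would define the comparison functor $\Phi:\Dis(R)\to 2\text{-}\ker(sq_R)$ by $\Phi(M,\mu)=(M,\mu^{-1})$ on objects and $\Phi(h)=h$ on morphisms, where $\mu$ is regarded as the isomorphism $M\otimes_R M\to R$. The morphism condition $\mu'\circ(h\otimes h)=\mu$ in $\Dis(R)$ is equivalent, after inverting, to $(h\otimes h)\circ\mu^{-1}=\mu'^{-1}$, which is exactly the condition defining morphisms in $2\text{-}\ker(sq_R)$; this makes $\Phi$ well defined and fully faithful. Essential surjectivity is immediate from the dictionary above, the inverse assignment being $(M,x)\mapsto(M,x^{-1})$. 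A short check that $\Phi$ carries the tensor form $\mu'\otimes\mu''$ to the monoidal structure inherited from $\Pic(R)$ then upgrades $\Phi$ to an equivalence of cat-groups.

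For the second assertion, $\Dis$ is a stack because $\Pic$ is a stack, $sq:\Pic\to\Pic$ is a morphism of stacks, and the $2$-kernel of a morphism of stacks is again a stack. To identify the prestack whose stackification is $\Dis$, I would apply Corollary \ref{liftedker} to $\alpha=sq:R^*\to R^*$: it yields $\G(R)=\G_{sq}\cong 2\text{-}\ker\bigl(\G_{R^*}\to\G_{R^*}\bigr)$, where $\G_{R^*}$ is the one-object prestack with automorphism group $R^*$ and the displayed map is induced by $sq:R^*\to R^*$. Since $\Pic$ is the stackification of $R\mapsto\G_{R^*}$, this induced map stackifies to $sq:\Pic\to\Pic$; as stackification preserves $2$-kernels, applying it to the above equivalence gives $(R\mapsto\G(R))^{+}\cong 2\text{-}\ker(sq:\Pic\to\Pic)\cong\Dis$, as claimed.

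The main obstacle is the rank-$1$ dictionary between nonsingular symmetric bilinear forms and isomorphisms $M\otimes_R M\to R$; once this is in place, everything else is a formal comparison of definitions together with the already recorded facts that $2$-kernels are preserved by stackification and that $2$-kernels of morphisms of stacks are stacks.
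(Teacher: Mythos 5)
Your proposal is correct and follows essentially the same route as the paper: identify a nonsingular symmetric bilinear form on a rank-$1$ projective module with an isomorphism $M\otimes_R M\to R$ to get the equivalence with the $2$-kernel, then apply Corollary \ref{liftedker} to $sq:R^*\to R^*$ and use that stackification preserves $2$-kernels. The only difference is that you spell out the details the paper leaves to the reader (the direction of the isomorphism, automatic symmetry in rank $1$, and the monoidal compatibility), all of which are accurate.
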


\begin{proof} Let $\mu$ be a nonsingular, bilinear form on a projective module $M$ of rank $1$. Then $\mu:M\otimes_RM\to R$ is an isomorphism, where $R$ is seen as the trivial module, and hence defines an object of $2\text{-}\ker(sq_R:\Pic(R)\to \Pic(R))$.
One easily checks that this construction yields an equivalence of categories 
$$\Dis(R)\cong 2\text{-}\ker(sq_R:\Pic(R)\to \Pic(R)).$$ 
We use Corollary \ref{liftedker} to show the last statement. It says that the cat-group $\G(R)$ is the 2-kernel of the morphism of cat-groups (associated to the vertical arrows), corresponding to the commutative diagram of abelian groups
$$\xymatrix{R^*\ar[r]^{sq}\ar[d]&R^*\ar[d] \ \\ 1\ar[r] &1.}$$
We see that the stackification of $R\mapsto \G(R)$ is the 2-kernel of $sq:\Pic\to \Pic$, which is $\Dis$, since stackification preserves 2-kernels.
\end{proof}

Note that equivalent prestacks give rise to equivalent stackifications and recall the exact sequence (\ref{ex_gz}) on page \pageref{ex_gz}. Denote $\dis(R):=\pi_0(\Dis(R))$. We have the following results:
 
\begin{Co} i) There is an exact sequence (\cite[(12.4) on p. 176]{hahn})
$$0\to \sU_2(R)\to \dis(R)\to \, _2{\sf Pic}(R)\to 0.$$

ii) We have an isomorphism of abelian groups
$$\pi_1(\Dis(R))\cong \mu_2(R)=\{x\in R| x^2=1\}.$$

iii) Propositions \ref{eqcat} and \ref{43} imply that one has an equivalence of cat-groups 
$$t_*^+:\Qu^{pq}(R)\to \Qu^{pt,t^{-1}q}(R).$$

iv) From Corollary \ref{44} iii) we immediately get that $\Qu^{1,-2}\cong \Dis$. If $2$ is invertible in $R$, the second of the remark on page \pageref{remii} yields the equivalence of cat-groups
$$\Qu^{-2,1}(R)\cong \Dis(R).$$ 
After applying $\pi_0$, we get the classical result (\cite[(12.8) on p. 183]{hahn})
$$\sQu(R)\cong {\sf Dis}(R).$$
\end{Co}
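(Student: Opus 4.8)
The plan is to handle parts i) and ii) simultaneously through the six-term sequence (\ref{ex_gz}), and to obtain iii) and iv) by pushing the free-level statements through stackification.

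For i) and ii) I would feed the morphism $\alpha=sq_R:\Pic(R)\to\Pic(R)$ into (\ref{ex_gz}), using the equivalence $\Dis(R)\cong 2\text{-}\ker(sq_R)$ from the preceding lemma. Since $\pi_1(\Pic(R))=R^*$ and $\pi_0(\Pic(R))={\sf Pic}(R)$, on which $sq_R$ acts by squaring, the sequence becomes
$$0\to\pi_1(\Dis(R))\to R^*\xto{sq}R^*\to\dis(R)\to{\sf Pic}(R)\xto{sq}{\sf Pic}(R).$$
The left-hand end identifies $\pi_1(\Dis(R))$ with $\ker(sq:R^*\to R^*)=\mu_2(R)$, giving ii) (here $x^2=1$ already forces $x\in R^*$). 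For i), the cokernel of squaring on $R^*$ is $\sU_2(R)$ by definition, while the kernel of squaring on ${\sf Pic}(R)$ is the $2$-torsion subgroup ${}_2{\sf Pic}(R)$; splicing the central portion of the six-term sequence then collapses it to the short exact sequence $0\to\sU_2(R)\to\dis(R)\to{}_2{\sf Pic}(R)\to 0$.

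For iii) I would recall from the corollary just proved that, for every admissible pair of parameters, $\Qu^{pq}$ is the stackification of $\Qu_f^{pq}$. Proposition \ref{eqcat} gives an equivalence $t_*:\Qu_f^{pq}(R)\to\Qu_f^{pt,t^{-1}q}(R)$ which is natural in the base ring and hence a morphism of the associated prestacks. As stackification is a $2$-functor, it preserves equivalences, so applying it to $t_*$ delivers the equivalence $t_*^+:\Qu^{pq}(R)\to\Qu^{pt,t^{-1}q}(R)$ of stacks. For iv) I would stackify the equivalence $\Qu_f^{1,-2}(R)\to\G(R)$ of Corollary \ref{44} iii); since $\Qu^{1,-2}$ is the stackification of $\Qu_f^{1,-2}$ and $\Dis$ is that of $R\mapsto\G(R)$ (preceding lemma), this yields $\Qu^{1,-2}\cong\Dis$. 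When $2\in R^*$, taking $t=-1/2$ in part iii) with $(p,q)=(-2,1)$ gives $pt=1$ and $t^{-1}q=-2$, hence $\Qu^{-2,1}(R)\cong\Qu^{1,-2}(R)\cong\Dis(R)$; applying $\pi_0$ recovers $\sQu(R)\cong{\sf Dis}(R)$.

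The step I expect to be the main obstacle is the bookkeeping in i): pinning down the connecting homomorphism of (\ref{ex_gz}) and confirming that the relevant image and kernel are exactly $\sU_2(R)$ and ${}_2{\sf Pic}(R)$, so that the six-term sequence genuinely degenerates into the claimed short exact sequence. A secondary point needing care is the naturality in the base ring of the functor $t_*$ in part iii), since this is precisely what licenses its stackification.
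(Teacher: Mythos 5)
Your argument is correct and follows the same route the paper intends: parts i) and ii) come from feeding the equivalence $\Dis(R)\cong 2\text{-}\ker(sq_R:\Pic(R)\to\Pic(R))$ into the six-term sequence (\ref{ex_gz}) with $\pi_1(\Pic(R))=R^*$ and $\pi_0(\Pic(R))={\sf Pic}(R)$, while iii) and iv) are obtained by stackifying the free-level equivalences of Proposition \ref{eqcat} and Corollary \ref{44} iii), using that $\Qu^{pq}$ is the stackification of $\Qu^{pq}_f$ and $\Dis$ that of $R\mapsto\G(R)$. Your choice $t=-1/2$ for the case $2\in R^*$ is exactly the substitution behind the paper's remark, so nothing further is needed.
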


\subsection{The stack $\Dis^{pq}$} 

We will need some morphism of stacks. The first one $sq:\Pic\to \Pic$ is given by $sq(M)=M\otimes_RM$, $M\in \Pic(R).$

\begin{Pro} There is a morphism of cat-groups
$$Sq:\Pic(R/pR)\to \Pic(R/p^2R),$$
for which the following diagram
$$\xymatrix{\Pic(R/p^2R)\ar[r]\ar[d]_{sq}&\Pic(R/pR)\ar[d]^{sq}\ar[dl]_{Sq}\\
\Pic(R/p^2R)\ar[r] &\Pic(R/pR)}$$
commutes up to natural isomorphisms. Here, the horizontal arrows are induced by the obvious ring homomorphism $R/p^2R\to R/pR$.
\end{Pro}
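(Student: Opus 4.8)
The plan is to obtain $Sq$ by stackifying the group homomorphism $Sq\colon (R/pR)^*\to (R/p^2R)^*$ of the Lemma above, using the fact recorded earlier that $\Pic$ is the stack associated to the prestack sending a ring to the cat-group of $R^*\to\{1\}$, i.e. to the classifying cat-group $\mathbf{B}(R^*)$. Since $R/p^2R\to R/pR$ is surjective with square-zero kernel $pR/p^2R$, the induced map of spectra is a homeomorphism, so I work throughout over the common underlying space $X=\Spec(R/pR)=\Spec(R/p^2R)$. On $X$ there live two sheaves of abelian groups, the unit sheaves $\mathcal{O}_{R/pR}^*$ and $\mathcal{O}_{R/p^2R}^*$, and the two stacks in the statement are the classifying stacks $\Pic(R/pR)=\mathbf{B}\,\mathcal{O}_{R/pR}^*$ and $\Pic(R/p^2R)=\mathbf{B}\,\mathcal{O}_{R/p^2R}^*$ of line bundles (torsors) under them.

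First I would promote the Lemma to a morphism of sheaves. Localisation preserves the square-zero hypothesis, since for $f\in R$ the map $(R/p^2R)_f\to (R/pR)_f$ is again a square-zero extension; hence the Lemma applies on each basic open $D(f)$ and gives $Sq(u)=\tilde u^{\,2}$ in $((R/p^2R)_f)^*$, independently of the lift $\tilde u$. As the value $\tilde u^{\,2}$ is visibly compatible with further localisation, these assemble into a homomorphism of sheaves $\mathcal{Sq}\colon \mathcal{O}_{R/pR}^*\to \mathcal{O}_{R/p^2R}^*$, and applying $\mathbf{B}(-)$ yields the morphism of cat-groups $Sq=\mathbf{B}\,\mathcal{Sq}\colon \Pic(R/pR)\to \Pic(R/p^2R)$; concretely it lifts a line bundle $N$ to $\tilde N$ over $R/p^2R$ and returns $\tilde N\otimes_{R/p^2R}\tilde N$. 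For commutativity I would reduce each of the two triangles to an identity of sheaf maps, after which the induced diagram of stacks commutes by functoriality of $\mathbf{B}(-)$. Writing $r$ for reduction mod $p$ and $sq$ for squaring, the lower triangle is the identity $r\circ\mathcal{Sq}=sq$ on $\mathcal{O}_{R/pR}^*$, which is immediate from $r(\tilde u^{\,2})=u^2$, while the upper triangle is $\mathcal{Sq}\circ r=sq$ on $\mathcal{O}_{R/p^2R}^*$, which holds because $\tilde u$ is a lift of $r(\tilde u)$, so $\mathcal{Sq}(r(\tilde u))=\tilde u^{\,2}=sq(\tilde u)$.

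The one genuine subtlety, and the step I expect to cost the most care, is the well-definedness of $\mathcal{Sq}$, namely its independence of the chosen lift on each section ring. This is precisely the computation of the Lemma above: two lifts of $u$ differ by an element of $1+pR/p^2R$, and squaring kills that ambiguity because $2p=-p^2q=0$ and $p^2=0$ in $R/p^2R$, so $(\tilde u+px)^2=\tilde u^{\,2}$. In the torsor picture this says the discrepancy between two lifts $\tilde N$ of $N$ is a line bundle over $R/p^2R$ that is trivial modulo $p$, and its square is canonically trivial for the same numerical reason. That lifts exist at all is square-zero deformation theory: the square-zero ideal sheaf is quasi-coherent and therefore has vanishing higher cohomology on the affine $X$, which also makes restriction $\Pic(R/p^2R)\to \Pic(R/pR)$ an isomorphism on $\pi_0$. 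Once independence of the lift and compatibility with localisation are secured, everything else is the formal functoriality of $\mathbf{B}(-)$.
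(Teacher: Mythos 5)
Your argument is correct, but it reaches $Sq$ by a genuinely different route from the paper's. The paper stays entirely inside the module categories: because the kernel of $R/p^2R\to R/pR$ is nilpotent, the restriction functor $\Pic(R/p^2R)\to\Pic(R/pR)$ is full, essentially surjective and reflects isomorphisms (Bass), and Lemma \ref{kv} shows that $sq$ on $\Pic(R/p^2R)$ is constant on the fibres of this restriction at the level of morphisms; $Sq$ is then defined by choosing lifts of objects and morphisms, these two facts guaranteeing independence of the choices and the commutativity of both triangles up to natural isomorphism. You instead identify $\Pic(S)$ with the category of torsors under the sheaf of units on the common underlying space of $\Spec(R/pR)$ and $\Spec(R/p^2R)$, promote the unit-group homomorphism $Sq:(R/pR)^*\to(R/p^2R)^*$ to a homomorphism of sheaves, and induce torsors along it; the well-definedness of that sheaf map is exactly the computation $(\tilde{u}+px)^2=\tilde{u}^2$ from $2p=-p^2q$ and $p^2=0$, i.e.\ the same local calculation that proves Lemma \ref{kv}. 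Your version buys that the two triangles become literal identities of sheaf homomorphisms, so commutativity of the stack diagram is formal, and the construction extends verbatim to non-affine schemes via the cocycle description $\alpha_{ij}\mapsto\tilde{\alpha}_{ij}^2$; the paper's version buys that no descent, torsor language, or cohomological vanishing is needed beyond the categorical statement of Bass. Two small points to make explicit in your write-up: torsor induction is a pseudofunctor, so equality of sheaf maps gives commutativity of the triangles only up to canonical natural isomorphism (which is all the Proposition asserts); and your concrete description of $Sq(N)$ as $\tilde{N}\otimes\tilde{N}$ relies on the existence of a lift $\tilde{N}$, which is the essential surjectivity you justify by vanishing of quasi-coherent cohomology on the affine base and which is equivalent to the Bass result the paper quotes.
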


\begin{proof} Since the kernel of the ring homomorphism $R/p^2R\to R/p$ is a nilpotent ideal, the functor $\Pic(R/p^2R)\to \Pic(R/pR)$ is full, essentially surjective  and reflects isomorphisms (see for example \cite[Proposition 2.12]{bass}). Hence, the result follows from Lemma \ref{kv} below.
\end{proof}

\begin{Le}\label{kv}
Let $f,g:M\to N$ be two morphisms in $\Pic(R/p^2R)$ such that the induced maps $f_*,g_*:M/pM\to N/pN$ coincide. Then, $sq(f)=sq(g):sq(M)\to sq(N)$.
\end{Le}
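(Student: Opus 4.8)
The plan is to reduce the statement to a single scalar computation in $R/p^2R$, exploiting that both $M$ and $N$ are invertible (rank $1$ projective) modules. Write $S=R/p^2R$. First I would record the key structural input: for any invertible $S$-module $M$ there is a canonical identification $\Hom_S(M,M)\cong M^{-1}\otimes_S M\cong S$, under which an endomorphism corresponds to multiplication by the associated scalar. Consequently $\mathrm{Aut}_S(M)\cong S^*$, i.e.\ every automorphism of $M$ is multiplication by a unique unit. Everything after this is formal.

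Given the two isomorphisms $f,g:M\to N$, the composite $h:=g^{-1}f:M\to M$ is an automorphism of $M$, hence equals multiplication by a unit $u\in S^*$. The hypothesis is that the reductions $f_*,g_*:M/pM\to N/pN$ along the base change $S\to R/pR$ agree; since base change is functorial, $(g^{-1}f)_*=g_*^{-1}f_*$, so this says precisely that $h_*:M/pM\to M/pM$ is the identity. But $h_*$ is multiplication by the image $\bar u\in(R/pR)^*$, so $\bar u=1$, i.e.\ $u=1+px$ for some $x\in S$.

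Now $sq$ is a monoidal functor, hence multiplicative on composites and it sends multiplication by a scalar $u$ on $M$ to multiplication by $u^2$ on $sq(M)=M\otimes_S M$. Thus $sq(f)=sq(g)\circ sq(h)$, with $sq(h)$ being multiplication by $u^2$, and it suffices to check $u^2=1$ in $S$. This is exactly the computation already used to prove that $Sq$ is well-defined: with $u=1+px$ one has $u^2=1+2px+p^2x^2$, and in $S=R/p^2R$ we have $p^2=0$ together with $2=-pq$, whence $2px=-qp^2x=0$ and $u^2=1$. Therefore $sq(h)=\mathrm{id}_{M\otimes M}$ and $sq(f)=sq(g)$.

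The only step needing genuine care — the ``main obstacle'', such as it is — is the identification $\mathrm{Aut}_S(M)\cong S^*$, which converts the a priori arbitrary discrepancy between $f$ and $g$ into a single scalar $u$ and thereby makes $sq$ insensitive to it. Once this reduction is in place, the congruence $u\equiv 1\pmod p$ coming from the hypothesis, combined with the relations $p^2=0$ and $2=-pq$ in $R/p^2R$, does the rest.
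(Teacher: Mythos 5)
Your proof is correct and rests on the same computation as the paper's: writing the discrepancy between $f$ and $g$ as a scalar congruent to $1$ mod $p$ (the paper's $b=a+pr$) and then using $p^{2}=0$ together with $2=-pq$ in $R/p^{2}R$ to kill the cross term in the square. The only difference is cosmetic -- the paper reduces to scalars by checking the identity locally, so that $M=N=R/p^{2}R$, whereas you achieve the same reduction globally via the canonical isomorphism $\mathrm{End}_{S}(M)\cong S$ for an invertible module $M$.
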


\begin{proof} This result can be checked locally, so we may assume that $M=N=R/p^2R$. There are $a, b\in R/p^2R$ such that $f(x)=ax$ and $g(x)=bx$ for all $x\in R/p^2R$. By assumption, $b=a+pr$ for some $r\in R/p^2R$. We have
$$sq(g)(xy)=g(x)g(y)=b^2xy=(a+pr)^2xy=a^2xy-p^2qar=a^2xy=f(x)f(y)=sq(f)(xy).$$
\end{proof} 
  
We now put
$$\Dis^{pq}(R)= 2\text{-}\ker(Sq:\Pic(R/pR)\to \Pic(R/p^2R)).$$
Since the 2-kernel of a morphism of stacks is again a stack, we see that $\Dis^{pq}$ is the stack associated to the prestack $R\mapsto \G^{pq}(R)$.

\begin{Le} 
One has natural isomorphisms 
$$\pi_0(\Dis^{pq}(R))={\sf Dis}(R/p^2R) \ \ {\rm and} \ \ \pi_1(\Dis^{pq}(R))\cong \ker(Sq:(R/pR)^*\to (R/p^2R)^*).$$
\end{Le}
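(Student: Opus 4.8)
The plan is to compute the two homotopy groups of $\Dis^{pq}(R)$ by exploiting the defining equivalence $\Dis^{pq}(R) = 2\text{-}\ker(Sq:\Pic(R/pR)\to \Pic(R/p^2R))$ together with the six-term exact sequence (\ref{ex_gz}). First I would write down the six-term sequence for the morphism $Sq:\Pic(R/pR)\to \Pic(R/p^2R)$, recalling from the stack $\Pic$ that $\pi_0(\Pic(S)) = {\sf Pic}(S)$ and $\pi_1(\Pic(S)) = S^*$ for any commutative ring $S$. This gives
$$0\to \pi_1(\Dis^{pq}(R))\to (R/pR)^*\xto{Sq} (R/p^2R)^*\to \pi_0(\Dis^{pq}(R))\to {\sf Pic}(R/pR)\to {\sf Pic}(R/p^2R).$$
The leftmost segment immediately yields the second isomorphism $\pi_1(\Dis^{pq}(R))\cong \ker(Sq:(R/pR)^*\to (R/p^2R)^*)$, since $\pi_1$ of a $2$-kernel is the kernel of the induced map on $\pi_1$.

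For the $\pi_0$ statement, the strategy is to recognize that $\Dis^{pq}(R)$ is, by construction, precisely the stackification of the prestack $R\mapsto \G^{pq}(R)$ (as already noted just before the statement), so $\pi_0$ should match the $\pi_0$ of the stack associated to $\G^{pq}$. The cleanest route is to identify $\pi_0(\Dis^{pq}(R))$ with ${\sf Dis}(R/p^2R) = \pi_0(\Dis(R/p^2R))$ by comparing two descriptions of the same $2$-kernel. Concretely, I would use the earlier Lemma identifying $\Dis(S)\cong 2\text{-}\ker(sq_S:\Pic(S)\to \Pic(S))$ applied to $S = R/p^2R$, and relate the $2$-kernel of $sq_{R/p^2R}$ to the $2$-kernel of $Sq$. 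The key algebraic input, supplied by Lemma \ref{22}, is that ${\sf Im}(Sq) = {\sf Im}(sq_{R/p^2R})$ as subgroups of $(R/p^2R)^*$, i.e. the image of $Sq$ coincides with the image of the ordinary squaring map on $(R/p^2R)^*$; this is exactly what forces the cokernels appearing in the two six-term sequences to agree.

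The main obstacle will be the $\pi_0$ identification, because $\pi_0$ of a $2$-kernel is not simply a kernel or cokernel but sits in the middle of the six-term sequence, controlled jointly by $\cok(Sq)$ and by $\ker({\sf Pic}(R/pR)\to {\sf Pic}(R/p^2R))$. To pin it down I would compare the exact sequence above with the corresponding one for $sq_{R/p^2R}:\Pic(R/p^2R)\to \Pic(R/p^2R)$, whose middle term is ${\sf Dis}(R/p^2R)$ by the earlier Lemma. Since the functor $\Pic(R/p^2R)\to \Pic(R/pR)$ induced by the nilpotent-kernel quotient $R/p^2R\to R/pR$ is full, essentially surjective and reflects isomorphisms (the same input used in the Proposition preceding Lemma \ref{kv}), it induces an isomorphism on ${\sf Pic}$ and on unit groups modulo the relevant images; combined with the equality of images from Lemma \ref{22}, this produces a commuting ladder between the two six-term sequences whose outer vertical maps are isomorphisms. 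Applying the five lemma to the ladder then yields the desired isomorphism $\pi_0(\Dis^{pq}(R))\cong {\sf Dis}(R/p^2R)$, and naturality in $R$ follows since every map involved is natural. The remaining verifications — that the squares of the ladder genuinely commute and that the image equality translates correctly into matching cokernels — are routine diagram chases that I would carry out but not belabor here.
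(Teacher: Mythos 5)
Your proposal is correct and follows essentially the same route as the paper: the $\pi_1$ statement is read off directly from the six-term sequence for $Sq$, and the $\pi_0$ statement is obtained by comparing that sequence with the one for $sq_{R/p^2R}$ (whose middle term is ${\sf Dis}(R/p^2R)$ via the identification $\Dis(S)\cong 2\text{-}\ker(sq_S)$), using that the nilpotent kernel of $R/p^2R\to R/pR$ makes ${\sf Pic}(R/p^2R)\to{\sf Pic}(R/pR)$ an isomorphism and $(R/p^2R)^*\to(R/pR)^*$ an epimorphism, and then applying the five lemma to the resulting ladder.
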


\begin{proof} By our construction, we have a diagram of cat-groups
$$\xymatrix{\Pic(R/p^2R)\ar[r]^{sq}\ar[d]&\Pic(R/p^2R)\ar[d]^{id} \ \\ \Pic(R/pR)\ar[r]_{Sq}&\Pic(R/p^2R)}$$ 
which commutes up to a natural isomorphism. We apply the exact sequence (\ref{ex_gz}), p.\pageref{ex_gz} to the horizontal morphisms. It follows from the discussion on \cite[p. 88] {gz} that one obtains a commutative  diagram with exact rows
$$\xymatrix@C-11pt{0\ar[r]&\mu_2(R/p^2R) \ar[r] \ar[d] &(R/p^2R)^*\ar[r]^{sq}\ar[d]& (R/p^2R)^*\ar[r]\ar[d]^{id}& {\sf Dis}(R/p^2R)\ar[r]\ar[d]& {\sf Pic}(R/p^2R)\ar[r]^{sq}\ar[d]&{\sf Pic}(R/p^2R)\ar[d]^{id}\ \\
0\ar[r]&\pi_1(\Dis^{pq}(R)) \ar[r]  &(R/pR)^*\ar[r]_{Sq}& (R/p^2R)^*\ar[r]& \pi_0(\Dis^{pq}(R))\ar[r]& {\sf Pic}(R/pR)\ar[r]_{Sq}&{\sf Pic}(R/p^2R).}$$ 

This already imply the result for $\pi_1(\Dis^{pq}(R))$. Since the kernel of $R/p^2R\to R/pR$ is a nilpotent ideal, the vertical map
${\sf Pic}(R/p^2R)\to {\sf Pic}(R/pR)$ is an isomorphism and $(R/p^2R)^*\to (R/pR)^*$ is an epimorphism (\cite[Proposition IX.3.4] {bass}). Hence the result is also true for $\pi_0(\Dis^{pq}(R))$, thanks to the 5-lemma.
\end{proof} 

\subsection{The main result} 

After stackification of the prestacks  involved in the diagram described in Theorem \ref{exact_{fpq}}, one obtains the diagram of stacks
\begin{center} $\xymatrix{ \Qu\ar[r]_{\alpha}\rruppertwocell<12>^{1}{^\delta}
&\Dis\ar[r]_{\beta}&\Dis^{pq}.}$
\end{center}
By general properties of stackification, Theorem \ref{exact_{fpq}} i) implies that the the induced functor \begin{center} $\gamma:\Qu\to 2\text{-}\ker(\beta)$
\end{center}
is locally essentially surjective.

Assume that $p$ is not a zero-divisor in $R$. One easily sees that the same property holds for the localization ring $R_r$, where  $r\in R$. It follows that $p$ is not a zero divisor for any of the rings involved in the process of stackification. By Theorem \ref{exact_{fpq}} ii), the induced morphism
\begin{center} $\gamma(R):\Qu^{pq}(R)\to 2\text{-}\ker(\Dis(R)\xto{\beta(R)} \Dis^{pq}(R))$
\end{center}
is an equivalence of cat-groups. The following is now a specialization of the exact sequence (\ref{ex_gz}):

\begin{Th}\label{exactpq} If $p$ is not a zero divisor of $R$, one has an exact sequence of abelian groups 
$$0\to \mathbb{Z}_{pq} (R)\to \mu_2(R)\to {\sf Im}(\mu_2(R/p^2R)\to \mu_2(R/pR))\to \sQu^{pq}(R)\to {\sf Dis}(R)\to {\sf Dis}(R/p^2R).$$
\end{Th}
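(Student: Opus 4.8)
The plan is to obtain the sequence as the six-term exact sequence (\ref{ex_gz}) attached to the morphism of cat-groups $\beta(R)\colon \Dis(R)\to \Dis^{pq}(R)$, once its 2-kernel has been identified with $\Qu^{pq}(R)$.

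First I would invoke the equivalence $\gamma(R)\colon \Qu^{pq}(R)\xto{\sim} 2\text{-}\ker(\beta(R))$ recorded just above: because $p$ is not a zero divisor it remains one in every localization relevant to stackification, so Theorem \ref{exact_{fpq}} ii) together with the fact that stackification preserves 2-kernels promotes the locally essentially surjective $\gamma$ to a genuine equivalence. This lets me replace $\pi_1(2\text{-}\ker(\beta(R)))$ and $\pi_0(2\text{-}\ker(\beta(R)))$ in (\ref{ex_gz}) by $\pi_1(\Qu^{pq}(R))$ and $\pi_0(\Qu^{pq}(R))$ respectively.

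Next I would substitute the homotopy groups of the three cat-groups. For $\Qu^{pq}$ I would note that $\pi_0(\Qu^{pq}(R))=\sQu^{pq}(R)$ by definition, while $\pi_1(\Qu^{pq}(R))=\mathbb{Z}_{pq}(R)$ because $\Qu^{pq}$ is the stackification of the prestack $\Qu_f^{pq}$, whose $\pi_1$ is already the sheaf $\mathbb{Z}_{pq}$, and stackification leaves $\pi_1$ unchanged. For $\Dis$ I would use the earlier identifications $\pi_1(\Dis(R))\cong\mu_2(R)$ and $\pi_0(\Dis(R))={\sf Dis}(R)$. For $\Dis^{pq}$ I would combine the lemma giving $\pi_1(\Dis^{pq}(R))\cong\ker(Sq\colon (R/pR)^*\to (R/p^2R)^*)$ with Lemma \ref{22}, which rewrites this kernel as ${\sf Im}(\mu_2(R/p^2R)\to\mu_2(R/pR))$, together with $\pi_0(\Dis^{pq}(R))={\sf Dis}(R/p^2R)$.

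Feeding these six identifications into (\ref{ex_gz}) with $\alpha=\beta(R)$, $\G=\Dis(R)$ and $\H=\Dis^{pq}(R)$ yields precisely the claimed exact sequence. At this point the argument is essentially bookkeeping, since the substantive work lies upstream: the real obstacle is the equivalence $\gamma(R)$, which rests on Theorem \ref{exact_{fpq}} ii) and on stackification commuting with the formation of 2-kernels. The only local checks I would still make are that the image description of $\pi_1(\Dis^{pq}(R))$ agrees with Lemma \ref{22} and that $\pi_1$ genuinely survives the passage to the stack $\Qu^{pq}$.
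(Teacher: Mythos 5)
Your proposal is correct and follows essentially the same route as the paper: identify $\Qu^{pq}(R)$ with $2\text{-}\ker(\beta(R))$ via $\gamma(R)$ (using that $p$ remains a non-zero-divisor under localization, Theorem \ref{exact_{fpq}} ii), and preservation of 2-kernels by stackification), then feed the computed $\pi_0$ and $\pi_1$ of $\Qu^{pq}(R)$, $\Dis(R)$ and $\Dis^{pq}(R)$ into the six-term sequence (\ref{ex_gz}). The paper's own argument is exactly this specialization, so no further comparison is needed.
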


\end{document}